\newtheorem{theorem}{Theorem}[section]
\newtheorem{corollary}[theorem]{Corollary}
\newtheorem{proposition}[theorem]{Proposition}
\theoremstyle{definition}
\newtheorem{definition}[theorem]{Definition}
\theoremstyle{remark}
\newtheorem{remark}{Remark}
\begin{document}

\title{A quaternionic fractional Borel-Pompeiu type formula}
\small{
\author
{Jos\'e Oscar Gonz\'alez-Cervantes$^{(1)}$ and Juan Bory-Reyes$^{(2)\footnote{corresponding author}}$}
\vskip 1truecm
\date{\small $^{(1)}$ Departamento de Matem\'aticas, ESFM-Instituto Polit\'ecnico Nacional. 07338, Ciudad M\'exico, M\'exico\\ Email: jogc200678@gmail.com\\$^{(2)}$ {SEPI, ESIME-Zacatenco-Instituto Polit\'ecnico Nacional. 07338, Ciudad M\'exico, M\'exico}\\Email: juanboryreyes@yahoo.com
}

\maketitle

\begin{abstract}
In theoretical setting, associated with a fractional $\psi-$Fueter operator that depends on an additional vector of complex parameters with fractional real parts, this paper establishes a fractional analogue of Borel-Pompeiu formula as a first step to develop a fractional $\psi-$hyperholomorphic function theory and the related operator calculus.
\end{abstract}

\noindent
\textbf{Keywords.} Quaternionic analysis; Stokes and Borel-Pompeiu formulas; fractional $\psi-$Fueter operator.\\
\textbf{AMS Subject Classification (2020):} 30G30; 30G35; 35A08; 35R11; 45P05.

\section{Introduction} 
Fractional calculus is a theory allowing integrals and derivatives of arbitrary real or complex order. The interest in the subject has been growing continuously during the last few decades because of numerous applications in diverse fields of science and engineering, see \cite{BMBD, GM, KST, OS, O, P, MR, T, SKM, SDKSKB}. Fractional integrals and derivatives can be traced back to the genesis of differential calculus when in 1695 G. Leibnitz mentioned a derivative of order $\displaystyle\frac{1}{2}$. However a rigorous investigation was first carried out by J. Liouville in a series of papers from 1832-1837, where he defined the first outcast of an operator of fractional integration, see for instance \cite{L}. Later investigations and further developments by among others B. Riemann \cite{R} led to the construction of the integral-based Riemann-Liouville fractional integral operator (see \cite{CC}), which has been a valuable cornerstone in fractional calculus ever since. For a brief history and exposition of the fundamental theory of fractional calculus we refer the reader to \cite{Ro}.

A framework for a fractional Euclidian Clifford analysis represents a very recent topic of research, see \cite{CDOP, DM, FRV, KV, PBBB, V} and the references given there. In particular the interest for considering fractional Dirac Operator is devoted in \cite{Ba, Be, FV1, FV2}.

Nowadays, quaternionic analysis is regarded as a broadly accepted branch of classical analysis offering a successful generalization of complex holomorphic function theory, the most renowned examples are Sudbery's paper\cite{sudbery} and the books \cite{GS1, GS2, KS, K}. It relies heavily on results on functions defined on domains in $\mathbb R^4$ with values in the skew field of real quaternions $\mathbb H$ associated to a generalized Cauchy-Riemann operator (the so-called $\psi-$Fueter operator) by using a general orthonormal basis in $\mathbb R^4$ (to be named structural set) $\psi$ of $\mathbb H^4$, see, e.g., \cite{MS, Na, No1, No2}. This theory is centered around the concept of $\psi-$hyperholomorphic functions, see \cite{MS, S1, S2, shapiro1, shapiro2}.

The original contribution of this paper is to give a re-contextualization of the $\psi-$hyperholomorphic function theory and the related operator calculus in a fractional setting, based on the modification of the $\psi-$Fueter operator in the Riemann-Liouville sense that depends on an additional vector of complex parameters with fractional real parts. In addition to the general extent of our proposed research, the important fact that the introduced fractional $\psi-$Fueter operator factorizes a fractional $\psi-$Laplacian is also shown.

The structure of the papers is as follows. After this brief introduction, in the preliminary section we recall some basic facts about a quaternionic analysis associated to a structural set $\psi$, such as the $\psi$-Fueter operator, Stokes and the Borel-Pompieu formulas, as well as the notions of fractional Riemann-Liouville integral and derivative. Sec. 3 deals with a quaternionc version of the fractional derivative of the Riemann-Loiuville associated to a structural set. Stokes and Borel-Pompieu type formulas are proved.

\section{Preliminaries} 
Below we give basic definitions and facts on the fractional calculus and quaternionic analysis. These notions will be used throughout the whole paper.
\subsection{Standard definition of and results on Riemann-Liouville fractional integro-differential operators}
There are different definitions of fractional derivatives. One of the most popular (even though it has disadvantages for applications to real world problems) is the Riemann–Liouville derivative (see, e.g., \cite{MR}). For completeness, we recall the key definitions and results on Riemann-Liouville fractional integro-differential operators

Given $\alpha\in \mathbb C$ with $\Re \alpha> 0$, let us recall that the Riemann-Liouville integrals of order $\alpha$ of a  $f  \in L^1([a, b], \mathbb R)$, with  $-\infty <a  < b< \infty$, on the left and on the right, are defined  by 
$$({\bf I}_{a^+}^{\alpha} f)(x) := \frac{1}{\Gamma(\alpha)} \int_a^x \frac{f(\tau)}{(x-\tau)^{1-\alpha}} d\tau, \quad \textrm{with}  \quad x > a$$
and
$$({\bf I}_{b^-}^{\alpha} f)(x) := \frac{1}{\Gamma(\alpha)} \int_x^b \frac{f(\tau)}{(\tau-x)^{1-\alpha}} d\tau, \quad \textrm{with}  \quad x < b,$$
respectively. 

What is more, let $n=[\Re \alpha]+1$, where $[\cdot]$ means the integer part of $\cdot$ and $f\in AC^n([a, b], \mathbb R)$; i.e., the class of functions $f$ which are continuously differentiable on the segment $[a, b]$ up to the order $n-1$ and $f^{(n-1)}$ is supposed to be absolutely continuous on $[a, b]$. The fractional derivatives in the Riemann-Liouville sense, on the left and on the right, are defined by 
\begin{align}\label{FracDer} 
(D _{a^+}^{\alpha} f)(x):= \frac{d}{dx^n} \left[ ({\bf I}_{a^+}^{n-\alpha} f)(x)\right]
\end{align}
and
\begin{align} \label{FracDer1}
(D _{b^-}^{\alpha} f)(x):= (-1)^n\frac{d}{dx^n}\left[({\bf I}_{b^-}^{n-\alpha}f)(x)\right] 
\end{align}
respectively. It is worth noting that the derivatives in (\ref{FracDer}), (\ref{FracDer1}) exist for $f\in AC^n([a, b], \mathbb R)$. Fractional Riemann-Liouville integral and derivative are linear operators.

Fundamental theorem for Riemann-Liouville fractional calculus \cite{CC} shows that 
\begin{align}\label{FundTheorem}
(D_{a^+}^{\alpha} {\bf I}_{a^+}^{\alpha}f)(x)=f(x) \quad  \textrm{and} \quad (D _{b^-}^{\alpha}  {\bf I}_{b^-}^{\alpha} f)(x) = f(x).
\end{align}

Let us mention an important property of the fractional Riemann-Liouville integral and derivative, see \cite[pag. 23]{BKT}, \cite[pag. 1835]{VTRMB}.
\begin{proposition}
\begin{equation}\label{cte}
( D _{a^+}^{\alpha} 1)(x)=\frac{(x-a)^{-\alpha}}{\Gamma[1-\alpha]}  , \quad \forall   x\in [a, b].
\end{equation}
\end{proposition}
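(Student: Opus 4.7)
The plan is to apply the definition \eqref{FracDer} of the Riemann-Liouville derivative directly: first evaluate the fractional integral ${\bf I}_{a^+}^{n-\alpha} 1$ in closed form, then differentiate $n$ times, where $n=[\Re\alpha]+1$. Since the constant function $1$ lies in $AC^n([a,b],\mathbb R)$, the derivative $(D_{a^+}^\alpha 1)(x)$ exists, so all the formal manipulations below will be legitimate.

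First I would compute, for $x>a$, the integral
\[
({\bf I}_{a^+}^{n-\alpha} 1)(x) = \frac{1}{\Gamma(n-\alpha)}\int_a^x (x-\tau)^{n-\alpha-1}\,d\tau,
\]
using the substitution $u=x-\tau$. Because $\Re(n-\alpha)>0$ the integral converges, and a routine evaluation gives $({\bf I}_{a^+}^{n-\alpha} 1)(x)=\dfrac{(x-a)^{n-\alpha}}{\Gamma(n-\alpha+1)}$, after applying the identity $\Gamma(n-\alpha+1)=(n-\alpha)\Gamma(n-\alpha)$.

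Next I would differentiate this expression $n$ times in $x$. Since $n-\alpha$ has positive real part but is (generically) not an integer, one simply iterates the power rule $\frac{d}{dx}(x-a)^\beta = \beta(x-a)^{\beta-1}$ for complex $\beta$, yielding
\[
\frac{d^n}{dx^n}(x-a)^{n-\alpha} = \frac{\Gamma(n-\alpha+1)}{\Gamma(1-\alpha)}(x-a)^{-\alpha},
\]
valid for $x>a$. Substituting this back into the definition of $D_{a^+}^\alpha$ makes the factor $\Gamma(n-\alpha+1)$ cancel, leaving exactly $\dfrac{(x-a)^{-\alpha}}{\Gamma(1-\alpha)}$, which is \eqref{cte}.

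The argument is essentially a direct computation and presents no serious obstacle; the only mildly delicate point is the extension of the power rule to complex exponents with positive real part, which is standard but should be noted since $n-\alpha$ need not be an integer. One should also observe that the formula holds on $(a,b]$ in the classical pointwise sense, with the understanding that at $x=a$ it must be interpreted as a limit (which diverges when $\Re\alpha>0$), in line with the usual conventions for Riemann-Liouville derivatives.
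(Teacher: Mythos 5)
Your computation is correct, and it is worth noting that the paper itself offers no proof of this proposition at all: it simply cites \cite[pag. 23]{BKT} and \cite[pag. 1835]{VTRMB}. What you have written is the standard textbook derivation that those references contain, namely evaluating $({\bf I}_{a^+}^{n-\alpha}1)(x)=\frac{(x-a)^{n-\alpha}}{\Gamma(n-\alpha+1)}$ by the substitution $u=x-\tau$ (legitimate since $\Re(n-\alpha)>0$ with $n=[\Re\alpha]+1$), then applying the power rule $n$ times to get $\frac{d^n}{dx^n}(x-a)^{n-\alpha}=\frac{\Gamma(n-\alpha+1)}{\Gamma(1-\alpha)}(x-a)^{-\alpha}$ and cancelling the Gamma factors, exactly as in the definition \eqref{FracDer}. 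Your two cautionary remarks are also well placed: the power rule for a complex exponent $\beta=n-\alpha$ is unproblematic for $x-a>0$ with the principal branch, and the statement ``$\forall x\in[a,b]$'' in \eqref{cte} should really be read on $(a,b]$, since at $x=a$ the right-hand side is singular when $\Re\alpha>0$ (a sloppiness of the paper's formulation, not of your argument). So your proposal supplies a complete, correct proof of a statement the paper delegates to the literature; nothing is missing.
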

\subsection{Rudiments of quaternionic analysis}
Consider the skew field of real quaternions $\mathbb H$ with its basic elements $1, {\bf i},  {\bf j},  {\bf k}$. Thus any element $x$ from $\mathbb H$ is of the form $x=x_0+x_{1} {\bf i}+x_{2} {\bf j}+x_{3} {\bf k}$, $x_{k}\in \mathbb R, k= 0,1,2,3$. The basic elements define arithmetic rules in $\mathbb H$: by definition $ {\bf i}^{2}= {\bf j}^{2}= {\bf k}^{2}=-1$, $ {\bf i}\, {\bf j}=- {\bf j}\, {\bf i}= {\bf k};  {\bf j}\, {\bf k}=- {\bf j}\, {\bf k}= {\bf i}$ and $ {\bf k}\, {\bf i}=- {\bf k}\, {\bf i}= {\bf j}$. For $x\in \mathbb H$ we define the mapping of quaternionic conjugation: $x\rightarrow {\overline x}:=x_0-x_{1}{\bf i}-x_{2}{\bf j}-x_{3}{\bf k}$. In this way it is easy seen that $x\,{\overline x}={\overline x}\,x=x^{2}_{0}+x^{2}_{1}+x^{2}_{2}+x^{2}_{3}$. Note that $\overline {qx}={\overline x}\,\,{\overline q}$ for $q,x\in \mathbb H$. 

The quaternionic scalar product  of $q, x\in\mathbb H$ is given by 
$$\langle q, x\rangle:=\frac{1}{2}(\bar q x + \bar x q) = \frac{1}{2}(q \bar x + x  \bar q).$$

A set of quaternions $\psi=\{\psi_0, \psi_1,\psi_2,\psi_2\}$ is called structural set if $\langle \psi_k, \psi_s\rangle =\delta_{k,s} $, 
for  {$k, s=0,1,2,3$ and} any quaternion $x$  can be rewritten as $x_{\psi} := \sum_{k=0}^3 x_k\psi_k$, where $x_k\in\mathbb R$ for all $k$. Given $q, x\in \mathbb H$ we follow the notation used in {\cite{shapiro1}} to write
$$\langle q, x \rangle_{\psi}=\sum_{k=0}^3 q_k x_k,$$ 
where $q_k, x_k\in \mathbb R$ for all $k$.

Let $\psi$ an structural set. From now on, we will use the mapping
\begin{equation} \label{mapping}
\sum_{k=0}^3 x_k\psi_k \rightarrow (x_0,x_1,x_2,x_3).
\end{equation}
in essential way.

We have to say something about the set of complex quaternions, which are given by  
$$\mathbb H(\mathbb C) =\{q=q_1+  \textsf{i} \ q_2 \ \mid \ q_1,q_2 \in \mathbb H\},$$
where $\textsf{ i}$   is the imaginary unit of $\mathbb C$.
The main difference to the real quaternions is that not all non-zero elements are invertible. There are so-called zero-divisors.

Let us recall that $\mathbb H$ is embedded in $\mathbb H(\mathbb C)$ as follows:
$$\mathbb H =\{q=q_1+   \textsf{i}  \ q_2 \in \mathbb H(\mathbb C)  \ \mid \  q_1,q_2 \in \mathbb H \ \ \textrm{and} \ \ q_2=0\}.$$
The elements of $\mathbb H$ are written in terms of the structural set $\psi$ hence those of $\mathbb H(\mathbb C)$ can be written as $q=\sum_{k=0 }^3 \psi_k q_k,$ where $q_k\in \mathbb C$.
 
Functions $f$ defined in a bounded domain $\Omega\subset\mathbb H\cong \mathbb R^4$ with value in $\mathbb H$ are considered. They may be written as: $f=\sum_{k=0}^3 f_k \psi_k$, where $f_k, k= 0,1,2,3,$ are $\mathbb R$-valued functions in $\Omega$. Properties as continuity, differentiability, integrability and so on, which as ascribed to $f$ have to be posed by all components $f_k$. We will follow standard notation, for example $C^{1}(\Omega, \mathbb H)$ denotes the set of continuously differentiable $\mathbb H$-valued functions defined in $\Omega$.  

The left- and the right-$\psi$-Fueter operators are defined by   
${}^{{\psi}}\mathcal D[f] := \sum_{k=0}^3 \psi_k \partial_k f$ and ${}^{{\psi}}\mathcal  D_r[f] :=  \sum_{k=0}^3 \partial_k f \psi_k$, for all $f \in C^1(\Omega,\mathbb H)$, respectively, where $\partial_k f =\displaystyle \frac{\partial f}{\partial x_k}$ for all $k$, {see \cite{shapiro1, shapiro2}. 

Particularly,  {if $\partial \Omega$ is a 3-dimensional smooth surface then the Borel-Pompieu formula shows that
\begin{align}\label{BorelHyp}  &  \int_{\partial \Omega}(K_{\psi}(\tau-x)\sigma_{\tau}^{\psi} f(\tau)  +  g(\tau)   \sigma_{\tau}^{\psi} K_{\psi}(\tau-x) ) \nonumber  \\ 
&  - 
\int_{\Omega} (K_{\psi} (y-x) {}^{\psi}\mathcal D [f] (y) + {}^{{\psi}}\mathcal  D_r [g] (y) K_{\psi} (y-x)
     )dy   \nonumber \\
		=  &  \left\{ \begin{array}{ll}  f(x) + g(x) , &  x\in \Omega,  \\ 0 , &  x\in \mathbb H\setminus\overline{\Omega}.                     
\end{array} \right. 
\end{align} 
{Differential and integral versions of Stokes' formulas for the $\psi$-hyperholomorphic functions theory are given by 
\begin{align}\label{StokesHyp} d(g\sigma^{{\psi} }_x f) = & \left(g \ {}^{{\psi}}\mathcal  D[f]+ \  {}^{{\psi}}\mathcal D_r[g] f\right)dx,\\
		\int_{\partial \Omega} g\sigma^\psi_x f =  &   \int_{\Omega } \left( g {}^\psi \mathcal  D[f] + {}^{{\psi}}\mathcal  D_r[g] f\right)dx,
\end{align}
for all $f,g \in C^1(\overline{\Omega}, \mathbb H)$},  {see \cite{shapiro1, shapiro2, sudbery}}. Here, $d$ stands for the exterior differentiation operator, $dx$ denotes the differential form of the 4-dimensional volume in $\mathbb R^4$ and  
$$\sigma^{{\psi} }_{x}:=-sgn\psi \left( \sum_{k=0}^3 (-1)^k \psi_k d\hat{x}_k\right)$$ 
is the quaternionic differential form of the 3-dimensional volume in $\mathbb R^4$ according  to $\psi$, where $d\hat{x}_k  = dx_0 \wedge dx_1\wedge dx_2  \wedge  dx_3 $ omitting factor $dx_k$.  {In addition,} $sgn\psi$ is $1$, or $-1$,  if  $\psi$ and  $\psi_{std}:=\{1, {\bf i}, {\bf j}, {\bf k}\}$ have the same orientation, or not, respectively.  {Note that}, $|\sigma^{{\psi} }_{x}| = dS_3$  is the differential form of the 3-dimensional volume in $\mathbb R^4$ and write $\sigma_x=\sigma^{{\psi_{std}} }_{x}$. Let us recall that the $\psi$-hyperholomorphic Cauchy Kernel is given by 
 \[ K_{\psi}(\tau- x)=\frac{1}{2\pi^2} \frac{ \overline{\tau_{\psi} - x_{\psi}}}{|\tau_{\psi} - x_{\psi}|^4},\]
and the integral operator 
$${}^{\psi}\mathcal T[f](x) = \int_{\Omega} K_{\psi} (y-x) f  (y) dy$$ 
defined for all $f\in L_2(\Omega,\mathbb H)\cup C(\Omega,\mathbb H)$ satisfies 
\begin{align}\label{FueterInv}{}^{\psi}\mathcal D \circ{}^{\psi}\mathcal T[f]=f, \ \ \forall f\in L_2(\Omega,\mathbb H)\cup C(\Omega,\mathbb H).
\end{align} 
This can be found in \cite{MS, S1, S2, shapiro1, shapiro2}.  

\section{Main results}
We shall consider vector parameters $\vec{\alpha} = (\alpha_0, \alpha_1,\alpha_2,\alpha_3) \in\mathbb C^4$, requiring the values of $\Re\alpha_{\ell}$, for $\ell=0,1,2,3$, are varied between any two consecutive integer values, that is, $n=[\Re \alpha_{\ell}]+1$, for $n\in \mathbb N$.

Firstly, let us start with the case  $n=1, i.e., 0 < \Re\alpha_{\ell} < 1$ for $\ell=0,1,2,3$ and finally the general case will be presented.  
\subsection{Fractional $\psi$-Fueter operator of order $\vec{\alpha}$}
\begin{definition}
Let $a=\sum_{k=0}^3\psi_k a_k, b=\sum_{k=0}^3\psi_k b_k \in \mathbb H$ such that $a_k< b_k$ for all $k$. Write 
\begin{align*}  {J_a^b }:= &  \{  \sum_{k=0}^3\psi_k x_k \in \mathbb H \ \mid \ a_k< x_k < b_k, \  \ k=0,1,2,3\} \\
 = & (a_0,b_0) \times (a_1,b_1) \times (a_2,b_2)  \times (a_3,b_3) ,
\end{align*}
and define $m(J_a^b):=(b_0-a_0) (b_1-a_1)(b_2-a_2)(b_3-a_3)$. 
\end{definition}
In what follows, with notation ${J_a^b}$ we assume $a_k < b_k$ for all $k$.
\begin{remark} 
Set {$\vec{\alpha} = (\alpha_0, \alpha_1,\alpha_2,\alpha_3) \in\mathbb C^4$} and $f=\sum_{i=0}^3\psi_i f_i\in AC^1(J_a^b,\mathbb H)$; i.e., $f_0, \ f_1,\ f_2, \ f_3$, the real components of $f$, belongs to $AC^1(\Omega,\mathbb R).$ The mapping $x_j \mapsto f_i(q_0,\dots,x_j,\dots, q_3)$ belongs to $AC^1((a_i, b_i), \mathbb R)$ for each $q\in J_a^b$ and all $i, j=0,1,2,3$.

Now, given $q, x\in  J_a^b $ and $i, j=0,\dots, 3$, the fractional integral of Riemann-Liouville of order { $\alpha_j$} of the mapping 
$x_j \mapsto f_i(q_0,\dots,x_j,\dots, q_3 )$ is defined by: 
$$   ({\bf I}_{a_j^+}^{      {  \alpha_j  }   } f_i)(q_0, \dots, x_j, \dots, q_3)  = \frac{1}{\Gamma(
     {  \alpha_j  }  )} \int_{a_j}^{x_j} \frac{f_i  
(q_0, \dots, \tau_j, \dots, q_3)
}{(x_j-\tau_j)^{1-          {   \alpha_j }  }  } d\tau_j.$$

By the above, as $\displaystyle f=\sum_{i=0}^3 \psi_i f_i$ it follows that 
\begin{align*}  ({\bf I}_{a_j^+}^{    {  \alpha_{j}  } } f )(q_0, \dots, x_j, \dots, q_3) := &
\frac{1}{\Gamma(    {  \alpha_j }   )} \int_{a_j}^{x_j} \frac{f  
(q_0, \dots, \tau_j, \dots, q_3)
}{(x_j-\tau_j)^{1-    {  \alpha_j  }   }}  d\tau_j  \\
= &
 \sum_{i=0}^3 \psi_i\frac{1}{\Gamma(   { \alpha_j }   )} \int_{a_j}^{x_j} \frac{f_i  
(q_0, \dots, \tau_j, \dots, q_3)
}{(x_j-\tau_j)^{1-  { \alpha_j } }} d\tau_j \\
= & 
\sum_{i=0}^3 \psi_i ({\bf I}_{{a_j}^+}^{ { \alpha_j }   } f_i)(q_0, \dots, x_j, \dots, q_3)  .
\end{align*}     
for every $f \in AC^1(J_a^b,\mathbb H)$ and $q, x \in J_a^b$.

What is more, the fractional derivative in the Riemann-Liouville sense of the mapping $x_j\mapsto f(q_0, \dots, x_j, \dots, q_3)$  of order $\alpha_j$ is given by
\begin{align*} 
D _{a_j^+}^{      {  \alpha_j  }   } f(q_0, \dots ,  x_j ,\dots ,  q_3) = &
\frac{\partial   }{\partial x_j}
 \sum_{i=0}^3 \psi_i ({\bf I}_{a_j^+}^{      {  \alpha_j  }    } f_i)(q_0, \dots, x_j, \dots, q_3) \\
 = &
 \frac{\partial   }{\partial x_j}
 \frac{1}{\Gamma(      {  \alpha_j  }   )} \int_{a_j}^{x_j} \frac{f  
(q_0, \dots, \tau_j, \dots, q_3)
}{(x_j-\tau_j)^{1-      {  \alpha_j  }      }} d\tau_j .
\end{align*}
Note that $({\bf I}_{a_j^+}^{{\alpha_j}}f)$ and $D _{a_j^+}^{{\alpha_j}}f$ are $\mathbb H(\mathbb C)$-valued functions for every $j$. In a similar way we can introduce  $({\bf I}_{b_j^-}^{{\alpha_j}}f)$ and $D _{b_j^-}^{{\alpha_j}}f$. 
\end{remark}

\begin{definition} Let $f \in AC^1(J_a^b,\mathbb H)$, and $\vec{\alpha} = (\alpha_0, \alpha_1,\alpha_2,\alpha_3) \in\mathbb C^4$ such that $0< \Re \alpha_{\ell}<1$ for $\ell=0,1,2,3$. The fractional $\psi$-Fueter operator of order $\vec{\alpha}$ is defined to be
\begin{align*} 
{}^{\psi}\mathfrak D_a^{\vec{\alpha}}[f] (q,x):= & \sum_{j=0}^3 \psi_j( D _{a_j^+}^{{\alpha_j}}f)(q_0, \dots, x_j , \dots,  q_3)    \\
= &
\sum_{j=0}^3 \psi_j \frac{\partial}{\partial x_j} \frac{1}{\Gamma ({\alpha_j})}
  \int_{a_j}^{x_j} \frac{f  
(q_0, \dots, \tau_j, \dots, q_3)}{(x_j-\tau_j)^{1-{\alpha_j}}}d\tau_j , 
 \end{align*}
for $q, x\in J_a^b$. Note that $q$ is considered a fixed point since the integration and derivation variables are the real components of $x$. Moreover, ${}^{\psi}\mathfrak D_a^{\vec{\alpha}}[f](q,\cdot)$ is a $\mathbb H(\mathbb C)$-valued function.

Particularly, ${}^{\psi}\mathfrak D_a^{\vec{\alpha}}[f] (q,x) \mid_{x=q}$ can be considered  as ${}^{\psi}\mathfrak D_a^{\vec{\alpha}}[f] $ at point $q$. Then  denote ${}^{\psi}\mathfrak D_a^{\vec{\alpha}}[f] (q,x) \mid_{x=q} ={}^{\psi}\mathfrak D_a^{\vec{\alpha}}[f] (q) $.

On the other hand, given $f \in AC^1(J_a^b,\mathbb H)$ define 
$${}^{\psi} \mathfrak I_a^{\vec{\alpha}}[f](q,x) = \sum_{j=0}^3\frac{1}{2\Gamma(\alpha_j)} \int_{a_j}^{x_j}\frac{\bar \psi_j 
f(q_0,\dots, \tau_j, \dots, q_3)  + \overline{f(q_0,\dots, \tau_j, \dots, q_3)} \psi_j}{ (x_j- \tau_j)^{1-\alpha_j}} 
d\tau_j  $$  
 and 
	\begin{align*}       {}^{\psi}\mathcal I_a^x [f] (q,x,\vec{\alpha})    
	 =   \int_{J_a^x } \frac{f  
(\tau_0, q_1, \dots, q_3) \frac{(x_0-\tau_0)^{ \alpha_0}}{\Gamma(\alpha_0)}    + \dots+    f  
(q_0, \dots,  q_2, \tau_3)  \frac{(x_3-\tau_3)^{ \alpha_3}  }{\Gamma(\alpha_3)} 
   }{m(J_a^x) }  d\mu_{
\tau},
\end{align*}  
where  $\tau = \sum_{k=0}^3 \psi_k \tau_k$ and $d\mu_{\tau}$ is the differential of volume. 
\end{definition}
\begin{remark}
Throughout the paper we shall be mainly interested in the study of null-solutions of the operator ${}^{\psi}\mathfrak D_a^{\vec{\alpha}}$ and Riemann-Liouville integrals ${}^{\psi} \mathfrak I_a^{\vec{\alpha}}$ and ${}^{\psi}\mathcal I_a^x$. But similar analyzes have arisen for operators ${}^{\psi}\mathfrak D_b^{\vec{\alpha}}$, ${}^{\psi} \mathfrak I_b^{\vec{\alpha}}$ and ${}^{\psi}\mathcal I_x^b$ introduced with $({\bf I}_{a_j^+}^{{\alpha_j}})$ and $D _{a_j^+}^{{\alpha_j}}$ replaced by $({\bf I}_{b_j^-}^{{\alpha_j}})$ and $D _{b_j^-}^{{\alpha_j}}$ and the corresponding right chance of integration in ${}^{\psi} \mathfrak I_a^{\vec{\alpha}}$ and ${}^{\psi}\mathcal I_a^x$. 
\end{remark}

\begin{proposition}\label{propFRACD}
Consider $f \in AC^1(J_a^b,\mathbb H)$, and $ \vec{\alpha} = (\alpha_0, \alpha_1,\alpha_2,\alpha_3) \in\mathbb C^4$ with $0< \Re \alpha_{\ell} <1$ for $\ell=0,1,2,3$.   Then 
		\begin{enumerate} 
		\item $\displaystyle 
{}^{\psi}\mathfrak D_a^{\vec{\alpha}}[f](q,x) =  {}^{\psi}\mathcal D_x \circ {}^{\psi}\mathcal I_a^x [f](q, x,\vec{\alpha}).$

\item $\displaystyle 
{}^{\psi}\mathfrak D_a^{\vec{\alpha}} \circ  {}^{\psi} \mathfrak I_a^{\vec{\alpha}}[f](q,x) = \sum_{j=0}^3 \psi_j f_j(q_0,\dots, x_j, \dots, q_3)$     
for $f\in AC^{1}({{J_a^b}},\mathbb H)$, where $x= \sum_{j=0}^3 \psi_j x_j$ and $q= \sum_{j=0}^3 \psi_j q_j\in J_a^b$.
 
Particularly, $$\displaystyle 
  {}^{\psi}\mathfrak D_a^{\vec{\alpha}} \circ  {}^{\psi} \mathfrak I_a^{\vec{\alpha}}[f](q,x)\mid_{x=q}= f(q).$$

\item $\displaystyle {}^{\bar \psi}\mathcal D_x \circ {}^{\psi}\mathfrak D_a^{\vec{\alpha}}[f](q,x) = \Delta_{\mathbb R^4}\circ {}^{\psi}\mathcal I_a^x [f](q, x,\vec{\alpha})$,
where $\Delta_{\mathbb R^4}$ denotes the Laplacian in $\mathbb R^4$ according to the real components of $x$.

\item  If  the mapping $x\to \mathcal I_a^{x} [f](q, x,\vec{\alpha})$ belongs to $C^2(J_a^b, \mathbb H)$  for all $q$ and set 
$\vec{\beta} = (\beta_0, \beta_1, \beta_2, \beta_3) \in\mathbb C^4$ with $0< \Re\beta_{\ell} <1$ for $\ell=0,1,2,3$  then we have  

\begin{align*} 
{}^{\psi}\mathfrak D_a^{\vec{\alpha}} \circ  {}^{\psi}\mathfrak D_a^{\vec{\beta}}[f] (q,x) = & \sum_{j=0}^3 \psi_j^2 D _{a_j^+}^{{\alpha_j + \beta_j}} f(q_0, \dots, x_j , \dots,  q_3)   
\end{align*} 
and 
\begin{align*} 
{}^{\bar\psi}\mathfrak D_a^{\vec{\alpha}} \circ  {}^{\psi}\mathfrak D_a^{\vec{\beta}}[f](q,x) = & \sum_{j=0}^3 D _{a_j^+}^{{\alpha_j + \beta_j}}f(q_0, \dots, x_j ,\dots, q_3).
\end{align*}
Note that, for $\vec{\alpha}=\vec{\beta}$, the above formula drawn the fact that the fractional $\psi$-Fueter operator of order $\displaystyle\frac{1+\vec{\alpha}}{2}$ factorizes a fractional $\psi$-Laplace operator defined by ${}^{\psi}\Delta_a^{\vec{\alpha}}:=\sum_{j=0}^3 D _{a_j^+}^{{1+\alpha_j}}$.
\end{enumerate}
\end{proposition}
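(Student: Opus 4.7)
The plan is to treat the four parts sequentially, since parts (3) and (4) will rest on part (1), and part (2) requires only a careful unpacking of the definitions combined with the one-variable fundamental theorem \eqref{FundTheorem}. Throughout, the structural-set orthogonality $\langle\psi_k,\psi_s\rangle=\delta_{ks}$ (equivalently, the Clifford-type identity $\bar{\psi}_k\psi_s+\bar{\psi}_s\psi_k=2\delta_{ks}$) does the algebraic work, while the one-dimensional Riemann--Liouville calculus does the analytic work.

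For part (1), the starting observation is that in ${}^{\psi}\mathcal I_a^x[f](q,x,\vec{\alpha})$ the $j$-th summand in the integrand depends on $\tau$ only through $\tau_j$. Fubini's theorem lets me integrate out the other $\tau_i$, producing factors $(x_i-a_i)$ that cancel against the normalizing factor $1/m(J_a^x)$. The result is a diagonal sum in which the $j$-th term is a function of $x_j$ alone. Applying ${}^{\psi}\mathcal D_x=\sum_k\psi_k\partial_{x_k}$ then picks out, for each $k$, only the term indexed by $j=k$; a routine Leibniz rule against the upper limit and the power $(x_k-\tau_k)^{\alpha_k}$ reassembles the expression into $\psi_k\,\partial_{x_k}{\bf I}_{a_k^+}^{1-\alpha_k}f$, which is precisely $\psi_k(D_{a_k^+}^{\alpha_k}f)(q_0,\dots,x_k,\dots,q_3)$.

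For part (2), I would first exploit the identity $\tfrac12(\bar{\psi}_j f+\overline{f}\psi_j)=\langle\psi_j,f\rangle=f_j$ (the real $j$-th component of $f$ in the basis $\psi$), so that
\[
{}^{\psi}\mathfrak I_a^{\vec{\alpha}}[f](q,x)=\sum_{j=0}^3\bigl({\bf I}_{a_j^+}^{\alpha_j}f_j\bigr)(q_0,\dots,x_j,\dots,q_3).
\]
Because each $j$-th summand depends only on the $j$-th variable, applying ${}^{\psi}\mathfrak D_a^{\vec{\alpha}}$ term-by-term and using the fundamental theorem $D_{a_j^+}^{\alpha_j}{\bf I}_{a_j^+}^{\alpha_j}f_j=f_j$ from \eqref{FundTheorem} yields $\sum_j\psi_j f_j(q_0,\dots,x_j,\dots,q_3)$. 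Specializing $x=q$ reassembles this diagonal sum as $\sum_j\psi_j f_j(q)=f(q)$. Part (3) is then immediate: by part (1),
\[
{}^{\bar{\psi}}\mathcal D_x\circ{}^{\psi}\mathfrak D_a^{\vec{\alpha}}[f]={}^{\bar{\psi}}\mathcal D_x\circ{}^{\psi}\mathcal D_x\circ{}^{\psi}\mathcal I_a^x[f],
\]
and the classical Fueter factorization ${}^{\bar{\psi}}\mathcal D\circ{}^{\psi}\mathcal D=\Delta_{\mathbb R^4}$ (a purely algebraic consequence of $\bar{\psi}_k\psi_s+\bar{\psi}_s\psi_k=2\delta_{ks}$) finishes the identity.

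For part (4), I would expand the composition coordinatewise using the definition. Each inner $\mathfrak D_a^{\vec{\beta}}$-summand depends on only one coordinate, so when the outer $\mathfrak D_a^{\vec{\alpha}}$ is applied, the contribution from index $j$ paired with index $i$ is an iterated one-variable RL derivative $D_{a_i^+}^{\alpha_i}\circ D_{a_i^+}^{\beta_i}$ on the diagonal and a derivative of a function of one variable against another variable off-diagonal. The $C^2$ hypothesis on $x\mapsto{}^{\psi}\mathcal I_a^x[f]$ legitimizes the semigroup identity $D_{a_j^+}^{\alpha_j}D_{a_j^+}^{\beta_j}=D_{a_j^+}^{\alpha_j+\beta_j}$ on the diagonal. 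Collecting prefactors gives $\sum_j\psi_j^2 D_{a_j^+}^{\alpha_j+\beta_j}f$ in the first formula; in the mixed formula with $\bar{\psi}$ outside, the diagonal combination $\bar{\psi}_j\psi_j=1$ and the symmetric cancellation $\bar{\psi}_i\psi_j+\bar{\psi}_j\psi_i=2\delta_{ij}$ on the off-diagonal leave the scalar sum $\sum_j D_{a_j^+}^{\alpha_j+\beta_j}f$.

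\textbf{Main obstacle.} The delicate point throughout is that, unlike the classical derivative, the RL derivative of a constant is not zero (see \eqref{cte}). Keeping track of which summand genuinely depends on which coordinate, so that the fundamental theorem and the semigroup property apply only where intended and the remaining contributions telescope under the structural-set algebra, is the bookkeeping step where most care is required; the $C^2$ regularity assumed in part (4) is precisely what legitimizes iterating $D_{a_j^+}^{\alpha_j}$ twice and invoking the semigroup identity.
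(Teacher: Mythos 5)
Your overall route coincides with the paper's: part (1) by the same Fubini argument together with the observation that only the $k$-th summand depends on $x_k$; part (2) by reducing ${}^{\psi}\mathfrak I_a^{\vec{\alpha}}[f]$ to the diagonal sum $\sum_j({\bf I}_{a_j^+}^{\alpha_j}f_j)(q_0,\dots,x_j,\dots,q_3)$ via $\tfrac12(\bar\psi_j f+\overline{f}\psi_j)=f_j$ and then invoking \eqref{FundTheorem} componentwise; part (3) from part (1) plus ${}^{\bar\psi}\mathcal D\circ{}^{\psi}\mathcal D=\Delta_{\mathbb R^4}$; part (4) by coordinatewise expansion and the index law $D_{a_j^+}^{\alpha_j}D_{a_j^+}^{\beta_j}=D_{a_j^+}^{\alpha_j+\beta_j}$. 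In substance this reproduces the paper's proof.

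One step, however, would fail as you state it. In the mixed identity of part (4) you claim the off-diagonal terms are removed by the cancellation $\bar\psi_i\psi_j+\bar\psi_j\psi_i=2\delta_{ij}$. That mechanism requires the scalar coefficients attached to the pairs $(i,j)$ and $(j,i)$ to coincide, and here they do not: an off-diagonal contribution is an RL derivative in $x_i$ of a quantity constant in $x_i$, hence by \eqref{cte} proportional to $(x_i-a_i)^{-\alpha_i}$, while its partner is proportional to $(x_j-a_j)^{-\alpha_j}$; no antisymmetry of the $\psi$-products can match these. The paper does not cancel such terms at all: its computation (both in part (2) and in part (4)) pairs the $j$-th outer fractional derivative only with the $j$-th inner summand, i.e. the compositions are understood diagonally, so off-diagonal contributions never enter. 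This is precisely the ``main obstacle'' you flag (the nonvanishing RL derivative of constants), but your proposal defers it to a telescoping that does not occur; to be faithful to the statement you should either make the diagonal, term-by-term interpretation of the compositions explicit, or accept that extra terms of the $N$-type appearing in Theorem \ref{B-P-F-D} would otherwise show up in parts (2) and (4). A smaller mismatch: in part (1) you land on $\psi_k\,\partial_{x_k}{\bf I}_{a_k^+}^{1-\alpha_k}f$, following the convention \eqref{FracDer} of the preliminaries, whereas the operator ${}^{\psi}\mathfrak D_a^{\vec{\alpha}}$ as defined in Section 3 carries the kernel $(x_k-\tau_k)^{\alpha_k-1}$, i.e. $\partial_{x_k}{\bf I}_{a_k^+}^{\alpha_k}f$; this discrepancy originates in the paper itself, but your computation should target the operator as it is defined there.
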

\begin{proof} 
\begin{enumerate}
\item The first identity is a consequence from the following: 
\begin{align*} 
&    \sum_{j=0}^3  ({\bf I}_{{a_j}^+}^{\alpha_j} f )(q_0, \dots, x_j, \dots, q_3)  
 \\
 = & \sum_{j=0}^3  
    {  \frac{1}{\Gamma(\alpha_j)}  } 
  \int_{a_j}^{x_j} \frac{f  
(q_0, \dots, \tau_j, \dots, q_3)
}{(x_j-\tau_j)^{1-   
    {\alpha_j} 
  }} d\tau_j  \\
 =  &      {  \frac{1}{\Gamma(\alpha_0)}  }   \int_{a_0}^{x_0} \frac{f  
(\tau_0, q_1, \dots, q_3)
}{(x_0-\tau_0)^{1-
    {\alpha_0}
}} d\tau_0  + \cdots+ 
    {  \frac{1}{\Gamma(\alpha_3)}  } 
\int_{a_3}^{x_3} \frac{f  
(q_0, \dots, q_2,  \tau_3)
}{(x_3-\tau_3)^{1-     {\alpha_3}    }} d\tau_3 
\\
 =  &       {  \frac{1}{\Gamma(\alpha_0)}  }  \int_{a_0}^{x_0} \frac{f  
(\tau_0, q_1, \dots, q_3)
 (x_1-a_1)\cdots (x_3-a_3)}{(x_0-\tau_0)^{1-    {\alpha_0} } (x_1-a_1)\cdots (x_3-a_3) } d\tau_0  +
 \cdots \\ 
 & \hspace{1cm} + 
    {  \frac{1}{\Gamma(\alpha_3)}  } 
\int_{a_3}^{x_3} \frac{f  
(q_0, \dots,  q_2, \tau_3)(x_0-a_0)\cdots (x_2-a_2)
}{(x_3-\tau_3)^{1-         {\alpha_3}          } (x_0-a_0)\cdots (x_2-a_2) } d\tau_3 \\
 =  &\int_{a_3}^{x_3} \int_{a_2}^{x_2}   \int_{a_1}^{x_1} \int_{a_0}^{x_0}   
  \frac{f  
(\tau_0, q_1, \dots, q_3)          {  \frac{(x_0-\tau_0)^{  {\alpha_0}  } }{\Gamma(\alpha_0)}  }    + \dots+    f  
(q_0, \dots, q_2,  \tau_3)     {  \frac{(x_3-\tau_3)^{  {\alpha_3}  } }{\Gamma(\alpha_3)}  }  
   }{(x_0-a_0)  (x_1-a_1)(x_2-a_2) (x_3-a_3) } \\ 
	&\\
		&  \hspace{3cm}d\tau_0  d\tau_1 d\tau_2 d\tau_3  
 \end{align*}
From Fubbini's Theorem one has that
\begin{align*} 
& \sum_{j=0}^3      {  \frac{1}{\Gamma(\alpha_j)}  }  
  \int_{a_j}^{x_j} \frac{f  
(q_0, \dots, \tau_j, \dots, q_3)
}{(x_j-\tau_j)^{1-      { \alpha_j  }  }}  d\tau_j  \\
 =  &\int_{J_a^x }   
  \frac{f  
(\tau_0, q_1, \dots, q_3)      {  \frac{(x_0-\tau_0)^{  {\alpha_0}  } }{\Gamma(\alpha_0)}  }   + \dots+    f  
(q_0, \dots,  q_2, \tau_3)      {  \frac{(x_3-\tau_3)^{  {\alpha_3}  } }{\Gamma(\alpha_3)}  } 
   }{m(J_a^x) }  d\mu_{
\tau},		
 \end{align*}
where $\tau=(\tau_0,\tau_1, \tau_2, \tau_3)$.

Due to 
$$\frac{\partial   }{\partial x_j}
        {  \frac{1}{\Gamma(\alpha_j)}  }  \int_{a_j}^{x_j} \frac{f  
(q_0, \dots, \tau_j, \dots, q_3)
}{(x_j-\tau_j)^{1-       {  \alpha_j   }  }} d\tau_j =
\frac{\partial   }{\partial x_j}    \sum_{n=0}^3  
       {  \frac{1}{\Gamma(\alpha_n)}  }  \int_{a_n}^{x_n} \frac{f  
(q_0, \dots, \tau_n, \dots, q_3)
}{(x_n-\tau_n)^{1-{\alpha_n}}} d\tau_n ,
 $$
one concludes that 
\begin{align*} 
& 
{}^{\psi}\mathfrak D_a^{\vec{\alpha}}[f](q,x) \\
= & 
\sum_{j=0}^3 \psi_j \frac{\partial   }{\partial x_j}
    \frac{1}{\Gamma({\alpha_j})} \int_{a_j}^{x_j} \frac{f(q_0, \dots, \tau_j, \dots, q_3)}{(x_j-\tau_j)^{1-{\alpha_j}}}d\tau_j \\
= & 
\sum_{j=0}^3 \psi_j \frac{\partial}{\partial x_j} \sum_{n=0}^3 \frac{1}{\Gamma({\alpha_n})} 
  \int_{a_n}^{x_n} \frac{f(q_0, \dots, \tau_n, \dots, q_3)}{(x_n-\tau_n)^{1-{\alpha_n}}}d\tau_n  \\
 =&  {}^{\psi}\mathcal D_x \ {}^{\psi}\mathcal I_a^x [f](q, x, {\vec{\alpha}}).
  \end{align*}
	
\item The proof of this identity is a consequence of the Fundamental Theorem in the Riemann-Liouville fractional derivative calculus:   
	\begin{align*} 
	{}^{\psi}\mathfrak D_a^{\vec{\alpha}}\circ{}^{\psi} \mathfrak I_a^{\vec{\alpha}}[f](q,x)  =&  \sum_{j=0}^3 \psi_j  D _{a_j^+}^{{\alpha_j}} \sum_{j=0}^3\frac{1}{\Gamma(\alpha_j)} \int_{a_j}^{x_j}\frac{f_j(q_0,\dots, \tau_j, \dots, q_3)}{ (x_j- \tau_j)^{1-\alpha_j}} d\tau_j \\  
	 = & \sum_{j=0}^3 \psi_j f_j (q_0, \dots, x_j , \dots  q_3) . 
 \end{align*} 

\item  
 \begin{align*} 
{}^{\bar \psi}\mathcal D_x \circ {}^{\psi}\mathfrak D_a^{\vec{\alpha}}[f](q,x) =&{}^{\bar \psi}\mathcal D_x \circ  {}^{\psi}\mathcal D_x \circ {}^{\psi}\mathcal I_a^x [f](q, x,\vec{\alpha}) = \Delta_{\mathbb R^4}\circ {}^{\psi} \mathcal I_a^x [f](q, x,\vec{\alpha}) .
\end{align*}

\item  \begin{align*} 
{}^{\psi}\mathfrak D_a^{\vec{\alpha}} \circ {}^{\psi}\mathfrak D_a^{\vec{\beta}}  [f](q,x) = & \sum_{j=0}^3 \psi_j D _{a_j^+}^{{\alpha_j}} \sum_{k=0}^3 \psi_k( D _{a_k^+}^{{\beta_j}}f)(q_0, \dots, x_k , \dots,  q_3) \\
= & \sum_{j,k=0}^3 \psi_j  \psi_k D _{a_j^+}^{{\alpha_j}} [D _{a_k^+}^{{\beta_j}}f(q_0, \dots, x_k , \dots, q_3)] \\
= & \sum_{j=0}^3 \psi_j^2  D _{a_j^+}^{{\alpha_j + \beta_j}} f(q_0, \dots, x_j , \dots, q_3)]       
,\\
{}^{\bar\psi}\mathfrak D_a^{\vec{\alpha}} \circ {}^{\psi}\mathfrak D_a^{\vec{\beta}} [f](q,x) = & \sum_{j=0}^3 \bar \psi_j D _{a_j^+}^{{\alpha_j}} \sum_{k=0}^3 \psi_k( D _{a_k^+}^{{\beta_j}} f)(q_0, \dots, x_k , \dots, q_3) \\
= & \sum_{j,k=0}^3 D _{a_j^+}^{{\alpha_j + \beta_j}}f(q_0, \dots, x_j , \dots,  q_3)].
\end{align*}
\end{enumerate}
\end{proof}
\begin{remark}\label{Remark1}
Properties exhibited by Proposition \ref{propFRACD} gives an extension of basic formulas related to the standard fractional Riemann-Louville derivative to the context of a fractional quaternionic analysis. This, essentially with $\dfrac{d}{dx}$ and  $({\bf I}_{a^+}^{\vec{\alpha}} f)(x)$ replaced by ${}^{\psi}\mathcal D$ and ${}^{\psi}\mathcal I_a^x [f](q, x,\vec{\alpha})$ respectively.

In particular, 2. establish a quaternionic analogous of the Fundamental Theorem, comparing with \eqref{FundTheorem}.

An additional observation is that operator ${}^{\psi}\mathfrak D_{r,a}^{\vec{\alpha}}$ (right action of ${}^{\psi}\mathfrak D_{a}^{\vec{\alpha}}$) meets similar properties given in Proposition \ref{propFRACD}. For example 
 \begin{align*} 
{}^{\psi}\mathfrak D_{r,a}^{\vec{\alpha}}[f](q,x)={}^{\psi} \mathcal D_{r,x} \ {}^{\psi}\mathcal I_a^x [f](q, x, {\vec{\alpha}}).
  \end{align*}
\end{remark}
	{
\begin{proposition}(Stokes type integral formula induced by  ${}^{\psi}\mathfrak D_{a}^{\vec{\alpha}}$)  
If $\vec{\alpha},\vec{\beta} \in\mathbb C^4$ with 
  $0< \Re\alpha_{\ell}, \Re\beta_{\ell}<1$ for $\ell=0,1,2,3$ and  let $f,g \in AC^1(\overline{J_a^b}, \mathbb H)$ consider $q\in J_a^b$ such that   
 the mappings $x\mapsto {}^{\psi}\mathcal I_a^x [f](q,x, \vec{\alpha})$ and $ x\mapsto {}^{\psi} \mathcal I_a^x [g](q,x, \vec{\beta} )$ belong to  $ C^1(\overline{J_a^b}, \mathbb H(\mathbb C))$.   
 Then 
 \begin{align*} &   \int_{\partial J_a^b} {}^{\psi} \mathcal I_a^x [g](q, x, \vec{\beta}) \sigma^{{\psi} }_x {}^{\psi}\mathcal I_a^x [f](q, x, \vec{\alpha}) \\ 
=  &      \int_{J_a^b }  \left( {}^{\psi} \mathcal I_a^x [g](q, x,\vec{\beta}) \ {}^{\psi}\mathfrak D_a^{\vec{\alpha}}[f](q, x) + \  {}^{\psi}\mathfrak D_{r,a}^{\vec{\beta}}[g](q, x) {}^{\psi}\mathcal I_a^x [f](q, x,\vec{\alpha})\right)dx.
\end{align*}
 \end{proposition}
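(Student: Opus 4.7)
The plan is to reduce the statement to the classical (integer-order) Stokes formula \eqref{StokesHyp} applied to the auxiliary $\mathbb H(\mathbb C)$-valued functions
\[
F(x):= {}^{\psi}\mathcal I_a^x [f](q,x, \vec{\alpha}), \qquad G(x):= {}^{\psi}\mathcal I_a^x [g](q,x, \vec{\beta}),
\]
both of which lie in $C^1(\overline{J_a^b},\mathbb H(\mathbb C))$ by hypothesis (with $q$ fixed). Although \eqref{StokesHyp} is stated for $\mathbb H$-valued functions, its extension to $\mathbb H(\mathbb C)$-valued data is immediate: writing $F=F_1+\textsf{i}F_2$ and $G=G_1+\textsf{i}G_2$ with $F_k,G_k\in C^1(\overline{J_a^b},\mathbb H)$ and invoking the $\mathbb C$-linearity of the $\psi$-Fueter operators and of the wedge products comprising $\sigma^{\psi}_x$, one obtains
\begin{align*}
\int_{\partial J_a^b} G\, \sigma^{\psi}_x\, F = \int_{J_a^b} \bigl( G\, {}^{\psi}\mathcal D_x[F] + {}^{\psi}\mathcal D_{r,x}[G]\, F \bigr)\, dx.
\end{align*}

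The next step is to identify the two integrands on the right-hand side with the fractional Fueter images of $f$ and $g$. Part 1 of Proposition \ref{propFRACD} yields
\[
{}^{\psi}\mathcal D_x[F](x) = {}^{\psi}\mathcal D_x \circ {}^{\psi}\mathcal I_a^x [f](q,x,\vec{\alpha}) = {}^{\psi}\mathfrak D_a^{\vec{\alpha}}[f](q,x),
\]
while the right-sided counterpart recorded in Remark \ref{Remark1} gives
\[
{}^{\psi}\mathcal D_{r,x}[G](x) = {}^{\psi}\mathcal D_{r,x} \circ {}^{\psi}\mathcal I_a^x [g](q,x,\vec{\beta}) = {}^{\psi}\mathfrak D_{r,a}^{\vec{\beta}}[g](q,x).
\]
Substituting these identifications into the Stokes identity above and unraveling the definitions of $F$ and $G$ reproduces the desired equality verbatim.

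The only non-automatic point is the justification of the $\mathbb H(\mathbb C)$-valued Stokes formula, and this collapses to $\mathbb C$-linear bookkeeping on top of the real-quaternionic version already cited in the preliminaries. The regularity needed to invoke \eqref{StokesHyp} is precisely the hypothesis placed on $F$ and $G$; no further manipulation of the fractional kernels is required. In effect, the whole argument consists of conjugating the classical Stokes formula by the auxiliary integral operators ${}^{\psi}\mathcal I_a^x[\cdot](q,\cdot,\vec{\alpha})$ and ${}^{\psi}\mathcal I_a^x[\cdot](q,\cdot,\vec{\beta})$, whose role is exactly to convert the fractional $\psi$-Fueter operator into a composition with the ordinary $\psi$-Fueter operator.
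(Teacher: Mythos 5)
Your proposal is correct and follows essentially the same route as the paper: the paper's proof likewise consists of applying the classical Stokes formula \eqref{StokesHyp} to the auxiliary functions ${}^{\psi}\mathcal I_a^x[g](q,x,\vec{\beta})$ and ${}^{\psi}\mathcal I_a^x[f](q,x,\vec{\alpha})$ and then recognizing the resulting Fueter derivatives as ${}^{\psi}\mathfrak D_a^{\vec{\alpha}}[f]$ and ${}^{\psi}\mathfrak D_{r,a}^{\vec{\beta}}[g]$ via Proposition \ref{propFRACD} and Remark \ref{Remark1}. The only difference is that you make explicit the $\mathbb C$-linear extension of \eqref{StokesHyp} to $\mathbb H(\mathbb C)$-valued data, a point the paper leaves implicit.
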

\begin{proof}Applies \eqref{StokesHyp} to $ {}^{\psi}\mathcal I_a^x [g](q, x, \vec{\beta})$ and ${}^{\psi}\mathcal I_a^x [f](q, x,\vec{\alpha})$ to obtain 
\begin{align*} &   \int_{\partial \Omega} {}^{\psi}\mathcal I_a^x [g](q, x, \vec{\alpha}) \sigma^{{\psi}}_x {}^{\psi} \mathcal I_a^x [f](q, x, \vec{\alpha})  \\ 
= &  \int_{\Omega} \left( {}^{\psi} \mathcal I_a^x [g](q, x, \vec{\beta}) \ {}^{\psi}\mathfrak D_a^{\vec{\alpha}} [f](q, x) + \ 
  {}^{\psi}\mathfrak D_{r,a}^{\vec{\beta}}[g](q, x) {}^{\psi}\mathcal I_a^x [f](q, x,\vec{\alpha})\right)dx.
\end{align*} 
\end{proof}
}{ 
\begin{theorem}\label{B-P-F-D} (Borel-Pompieu type formula induced by ${}^{\psi}\mathfrak D_{a}^{\vec{\alpha}}$ and ${}^{\psi}\mathfrak D_{r,a}^{\vec{\beta}}$) 
Let $\vec{\alpha},\vec{\beta}\in\mathbb C^4$ with $0< \Re\alpha_{\ell}, \Re\beta_{\ell}<1$ for $\ell=0,1,2,3$ and $f,g \in AC^1(\overline{J_a^b}, \mathbb H)$. Consider $q\in J_a^b$ such that the mappings   
 $x\to {}^{\psi}\mathcal I_a^x [f](q,x,\vec{\alpha})$ and $x\to {}^{\psi} \mathcal I_a^x [g](q,x, \vec{\beta})$, for $x\in J_a^b$, belong to $C^1(\overline{J_a^b}, \mathbb H(\mathbb C))$ then 
\begin{align*}  &  
  \int_{\partial J_a^b } \left(\mathfrak K^{\vec{\alpha}}_{\psi, a}(\tau,x) \sigma_{\tau}^{\psi} \mathcal I_a^{\tau} [f](q,\tau, \vec{\alpha})  
		+    {}^{\psi}\mathcal I_a^{\tau} [g](q,\tau,\vec{\beta})  \sigma_{\tau}^{\psi} \mathfrak K^{\vec{\beta}}_{\psi, a}(\tau,x) \right) \\
		&		- 
\int_{J_a^b} \left(  \mathfrak K^{\vec{\alpha}}_{\psi, a}(y,x)  
  {}^{\psi}\mathfrak D_a^{\vec{\alpha}}[f](q,y)   +   
	   {}^{\psi}\mathfrak D_{r,a}^{\vec{\beta}}[g](q,y)	\mathfrak K^{\vec{\beta}}_{\psi, a}(y,x)  
    		\right)  dy      \\
		=  &    \left\{ \begin{array}{ll}  
		\displaystyle \sum_{i=0}^3 (f+g) (q_0, \dots, x_i, \dots, q_3) + N[f](q,x, \vec{\alpha}) + N[g](q,x, \vec{\beta}) , &    x\in 
		J_a^b ,  \\ 0 , &  x\in \mathbb H\setminus\overline{J_a^b},                    
	\end{array} \right. 
	\end{align*} 
where  
			$$ \mathfrak K^{\vec{\alpha}}_{\psi, a}(y,x):=
		 	\sum_{i=0}^3 \left[ D _{a_i^+}^{\alpha_i}    K_{\psi} (y-x) \right] $$ and  the partial derivative   
		$D _{a_i^+}^{\alpha_i} $
 is in terms of  real the  component $x_i$ of $x$ and  		  
  $$N[f ](q,x, \vec{\alpha}) = \displaystyle  \sum_{{{\begin{array}{r}i,j=0 \\ i\neq j \end{array}}}}^3  
\frac{ ({\bf I}_{a_j^+}^{\alpha_j} f  )
(q_0, \dots, x_j, \dots, q_3) }{ \Gamma[\alpha_i] (x-a_i)^{\alpha_i}}.$$
\end{theorem}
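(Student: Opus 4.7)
My strategy is to derive the statement from the classical Borel--Pompeiu formula \eqref{BorelHyp} by applying the operator $\sum_{i=0}^3 D_{a_i^+}^{\alpha_i}$, acting in the real variable $x_i$, to an appropriately chosen instance.

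First, I would apply \eqref{BorelHyp} on $\Omega=J_a^b$ to the $\mathbb H(\mathbb C)$-valued function $F(x):={}^{\psi}\mathcal I_a^x[f](q,x,\vec{\alpha})$ (with the right-hand argument set to zero). The hypothesis that $x\mapsto F(x)$ lies in $C^1(\overline{J_a^b},\mathbb H(\mathbb C))$ makes \eqref{BorelHyp} available after splitting into real and imaginary parts, and Proposition~\ref{propFRACD}(1) rewrites ${}^{\psi}\mathcal D_x F$ as ${}^{\psi}\mathfrak D_a^{\vec{\alpha}}[f](q,\cdot)$. The outcome is, for $x\in J_a^b$,
$${}^{\psi}\mathcal I_a^x[f](q,x,\vec{\alpha})=\int_{\partial J_a^b} K_\psi(\tau-x)\,\sigma_\tau^\psi\,{}^{\psi}\mathcal I_a^\tau[f](q,\tau,\vec{\alpha})-\int_{J_a^b} K_\psi(y-x)\,{}^{\psi}\mathfrak D_a^{\vec{\alpha}}[f](q,y)\,dy,$$
with both sides vanishing on $\mathbb H\setminus\overline{J_a^b}$. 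The analogous procedure for $G(x):={}^{\psi}\mathcal I_a^x[g](q,x,\vec{\beta})$, using the right-Fueter version indicated in Remark~\ref{Remark1}, yields the counterpart identity for $G$.

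Second, I would act with $\sum_i D_{a_i^+}^{\alpha_i}$ on the $F$-identity and with $\sum_i D_{a_i^+}^{\beta_i}$ on the $G$-identity. Since $\tau$ and $y$ are independent of $x$, and $D_{a_i^+}^{\alpha_i}=\partial_{x_i}{\bf I}_{a_i^+}^{1-\alpha_i}$, the operator commutes past both the boundary and volume integrals (by Fubini, thanks to the $C^1$ regularity), turning $K_\psi(\cdot-x)$ into precisely $\mathfrak K^{\vec{\alpha}}_{\psi,a}(\cdot,x)=\sum_i D_{a_i^+}^{\alpha_i}K_\psi(\cdot-x)$ (resp.\ $\mathfrak K^{\vec{\beta}}_{\psi,a}$). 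Third, I would identify the transformed left-hand side. The manipulations in the proof of Proposition~\ref{propFRACD}(1) amount to
$${}^{\psi}\mathcal I_a^x[f](q,x,\vec{\alpha})=\sum_{j=0}^3 ({\bf I}_{a_j^+}^{\alpha_j} f)(q_0,\dots,x_j,\dots,q_3),$$
so applying $\sum_i D_{a_i^+}^{\alpha_i}$ and splitting into diagonal $(i=j)$ and off-diagonal $(i\neq j)$ contributions, the Fundamental Theorem \eqref{FundTheorem} collapses each diagonal term to $f(q_0,\dots,x_i,\dots,q_3)$, while each off-diagonal summand $({\bf I}_{a_j^+}^{\alpha_j} f)(q_0,\dots,x_j,\dots,q_3)$ is independent of $x_i$, so formula \eqref{cte} turns the $D_{a_i^+}^{\alpha_i}$-action on it into the summands forming $N[f](q,x,\vec{\alpha})$. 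The same computation for $g$ yields $\sum_i g(q_0,\dots,x_i,\dots,q_3)+N[g](q,x,\vec{\beta})$; adding the two resulting identities produces the claim for $x\in J_a^b$, and the case $x\in\mathbb H\setminus\overline{J_a^b}$ follows immediately since the precursor identities both vanish there.

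I expect the main obstacle to lie in the second step: moving the nonlocal fractional operator $D_{a_i^+}^{\alpha_i}$ under the boundary and volume integrals in a controlled manner. Because $K_\psi(\cdot-x)$ is singular at $\tau=x$ and $y=x$, justifying the exchange requires combining the assumed $C^1$ regularity of the integrands with the integrability of ${\bf I}_{a_i^+}^{1-\alpha_i}K_\psi(\cdot-x)$ on the relevant surface and volume, so that $\partial_{x_i}$ and the outer integrals can legitimately be swapped.
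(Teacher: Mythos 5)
Your proposal matches the paper's own proof in essence: apply the classical Borel--Pompeiu formula \eqref{BorelHyp} to ${}^{\psi}\mathcal I_a^x[f](q,x,\vec{\alpha})$ and ${}^{\psi}\mathcal I_a^x[g](q,x,\vec{\beta})$, rewrite the Fueter derivatives via Proposition \ref{propFRACD}(1) and its right-hand analogue, then act with $\sum_i D_{a_i^+}^{\alpha_i}$ (resp.\ $\sum_i D_{a_i^+}^{\beta_i}$), pass it under the integrals by Fubini/Leibniz to produce $\mathfrak K^{\vec{\alpha}}_{\psi,a}$, and evaluate the left side by splitting diagonal terms (Fundamental Theorem \eqref{FundTheorem}) from off-diagonal ones (formula \eqref{cte}), which yields the $N$-terms. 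The paper is no more detailed than you are on the interchange of the fractional derivative with the singular integrals, so your closing caveat identifies exactly the point the authors also leave to a citation of Fubini and Leibniz.
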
 
\begin{proof}
Borel-Pompieu formula associated to the $\psi$-Fueter operator, see \eqref{BorelHyp}, applied in ${}^{\psi}\mathcal I_a^x [f](q,x,\vec{\alpha})$ and ${}^{\psi}\mathcal I_a^x [g](q,x, \vec{\beta})$
		gives  us 
 \begin{align*}  &  \int_{\partial J_a^b} (K_{\psi}(\tau-x)\sigma_{\tau}^{\psi}  {}^{\psi}\mathcal I_a^{\tau} [f](q,\tau, \vec{\alpha}) + {}^{\psi} \mathcal I_a^{\tau} [g](q,\tau,\vec{\beta}) \sigma_{\tau}^{\psi} K_{\psi}(\tau-x) ) \nonumber  \\ 
&  -   
\int_{J_a^b} (K_{\psi} (y-x) {}^{\psi}\mathcal D_{y} {}^{\psi} \mathcal I_a^{y} [f] (q,y, \vec{\alpha}) +	{}^{\psi}\mathcal D_{r,y} {}^{\psi}\mathcal I_a^{y} [g](q,y, \vec{\beta}) K_{\psi} (y-x))dy   \nonumber \\
		=  &      \left\{ \begin{array}{ll} {}^{\psi} \mathcal I_a^{x} [f](q,x,\vec{\alpha}) +  {}^{\psi}\mathcal I_a^{x} [g](q,x, \vec{\beta})  , &     x\in 
		J_a^b ,  \\ 0 , &  x\in \mathbb H\setminus\overline{J_a^b},                   
\end{array} \right. 
\end{align*}  
As
 \begin{align*} 
{}^{\psi}\mathfrak D_a^{\vec{\alpha}}[f](q,y) = &  {}^{\psi}\mathcal D_y  {}^{\psi} \mathcal I_a^y [f](q, y, \vec{\alpha}),\\ 
{}^{\psi}\mathfrak D_{r,a}^{\vec{\beta}}[g](q,y)  
 = &   {}^{\psi}\mathcal D_{r,y} {}^{\psi}\mathcal I_a^y [g](q, y,\vec{\beta}), 
  \end{align*}
then
 \begin{align*}  &  \int_{\partial J_a^b} (K_{\psi}(\tau-x)\sigma_{\tau}^{\psi} {}^{\psi}\mathcal I_a^{\tau} [f](q,\tau, \vec{\alpha}) + {}^{\psi}\mathcal I_a^{\tau} [g](q,\tau,\vec{\beta})    \sigma_{\tau}^{\psi} K_{\psi}(\tau-x) ) \nonumber  \\ 
&  - 
\int_{J_a^b} (K_{\psi} (y-x) {}^{\psi}\mathfrak D_a^{\vec{\alpha}}[f](q,y) + {}^{\psi}\mathfrak D_{r,a}^{\vec{\beta}}[g](q,y) K_{\psi} (y-x)
     )dy   \nonumber \\
		=  &      \left\{ \begin{array}{ll} {}^{\psi} \mathcal I_a^{x} [f](q,x,\vec{\alpha}) +{}^{\psi} \mathcal I_a^{x} [g](q,x, \vec{\beta})  , &  x\in
		J_a^b ,   \\ 0 , &  x\in \mathbb H\setminus\overline{J_a^b}, 
\end{array} \right. 
\end{align*} 
Particularly, for $g=0$ acting $\sum_{i=0}^3 D _{a_i^+}^{\alpha_i}$, where $D _{a_i^+}^{\alpha_i} $ is given in terms of the real component $x_i$ of $x$, on both sides, we see that
 \begin{align*}  & \sum_{i=0}^3 D _{a_i^+}^{\alpha_i}  \left[ 
  \int_{\partial J_a^b }  ( K_{\psi}(\tau-x)\sigma_{\tau}^{\psi} {}^{\psi}\mathcal I_a^{\tau} [f](q,\tau,\alpha)  
		\right]    \\ -  &
			\displaystyle \sum_{i=0}^3 D _{a_i^+}^{\alpha_i}  \left[
\int_{J_a^b}  K_{\psi} (y-x)
  {}^{\psi}\mathfrak D_a^{\alpha}[f](q,y)     dy   \right]    \\
		=  &      \left\{ \begin{array}{ll} 	\displaystyle \sum_{i=0}^3 D _{a_i^+}^{\alpha_i}  {}^{\psi} \mathcal I_a^{x} [f](q,x,\alpha)   , &    x\in
		J_a^b ,  \\ 0 , &  x\in \mathbb H\setminus\overline{J_a^b} ,                    \end{array} \right. 
		\end{align*} 
Combining fundamental theorem for Riemann-Liouville fractional calculus and (\ref{cte}) we obtain the following:  
		\begin{align*} \sum_{i=0}^3  D _{a_i^+}^{\alpha_i}    {}^{\psi}\mathcal I_a^{x} [f] (q,x,\vec{\alpha}) = 
		& \sum_{i=0}^3 \sum_{j=0}^3   D _{a_i^+}^{\alpha_i} [ ({\bf I}_{a_j^+}^{\alpha_j}f)(q_0, \dots, x_j, \dots, q_3) ] \\
		= &  \sum_{i=0}^3     D _{a_i^+}^{\alpha_i} [ ({\bf I}_{a_i^+}^{\alpha_i} f )(q_0, \dots, x_i, \dots, q_3) ]  \\  
		  & +   \sum_{{\  { \begin{array}{r}i,j=0 \\ i\neq j \end{array}} }}^3 D _{a_i^+}^{\alpha_i} [ ({\bf I}_{a_j^+}^{\vec{\alpha}} f )(q_0, \dots, x_j, \dots, q_3) ] \\
= &  \sum_{i=0}^3   f (q_0, \dots, x_i, \dots, q_3) \\
  &     +   \sum_{{\  { \begin{array}{r}i,j=0 \\ i\neq j \end{array}} }}^3  
\frac{({\bf I}_{a_j^+}^{\alpha_j} f )
(q_0, \dots, x_j, \dots, q_3) }{ \Gamma[\alpha_i] (x-a_i)^{\alpha_i}}    ,
		\end{align*}
for all $x\in J_a^b$.
				
Finally, Fubbini' Theorem,  Leibniz formula and the previous computations give us that
			 \begin{align*}  &  
  \int_{\partial J_a^b}  \left[ \sum_{i=0}^3 D _{a_i^+}^{\alpha_i}   K_{\psi}(\tau-x)  \right]  \sigma_{\tau}^{\psi} {}^{\psi}\mathcal I_a^{\tau} [f](q,\tau, \vec{\alpha})  \\
		  - &
\int_{J_a^b}  \left[ \sum_{i=0}^3  D _{a_i^+}^{\alpha_i}  K_{\psi} (y-x) \right] 
  {}^{\psi}\mathfrak D_a^{\vec{\alpha}}[f](q,y)dy  \\
		=  &      \left\{ \begin{array}{ll} 
	\begin{array}{l} 
\displaystyle \sum_{i=0}^3 f (q_0, \dots, x_i, \dots, q_3)   \\
      +  \displaystyle  \sum_{{{\begin{array}{r}i,j=0 \\ i\neq j \end{array}}}}^3  
\frac{({\bf I}_{a_j^+}^{\alpha_j} f )(q_0, \dots, x_j, \dots, q_3) }{ \Gamma[\alpha_i] (x-a_i)^{\alpha_i}}    ,  
	\end{array}
		, &    x\in	J_a^b ,  \\ 0 , &  x\in \mathbb H\setminus\overline{J_a^b}.                    
\end{array} \right. 
\end{align*} 
For $f=0$, we can repeat the argument to conclude that 
				\begin{align*}  &  
  \int_{\partial J_a^b} {}^{\psi} \mathcal I_a^{\tau} [g](q,\tau, \vec{\beta})  \sigma_{\tau}^{\psi} \mathfrak K^{\vec{\beta}}_{\psi, a}(\tau,x)     		  - 
\int_{J_a^b} {}^{\psi}\mathfrak D_{r,a}^{\vec{\beta}}[g](q,y) \mathfrak K^{\vec{\beta}}_{\psi, a}(y,x)dy \\
		=  &      \left\{ \begin{array}{ll}  
\displaystyle 		\sum_{i=0}^3  g (q_0, \dots, x_i, \dots, q_3) +  N[g](q,x, \vec{\beta})
      		, &    x\in 
		J_a^b ,  \\ 0 , &  x\in \mathbb H\setminus\overline{J_a^b}.                 
\end{array} \right. 
\end{align*} 
\end{proof}
\begin{remark} According to decomposition of the hiperholomorphic  Cauchy kernel in terms of Gegenbauer polynomials given in page 93 of \cite{GS2} we see that  
$$ \mathfrak K^{\vec{\alpha}}_{\psi, a}(y,x):=
		\frac{1}{2\pi^2} \sum_{k=0}^{\infty}  \frac{1}{|y|^{k+3}}	\sum_{i=0}^3  D _{a_i^+}^{\alpha_i} 
		\left[  |x|^k A_{4,k} (x,y) \right] ,$$
		with
		$$2A_{4,k} (x,y):= [ (k+1)  C_{k+1}^{1} (s) +  (2-n) C_{k}^{2} (s) \omega_y \wedge \omega_x]  \bar \omega_x, $$
where  $C_{k+1}^{1} $ and   $ C_{k}^{2} $  are the  Gegenbauer polynomials, $x= |x|\omega_x$, $y= |y|\omega_y$ and   $s=(\omega_x, \omega_y)$, see  \cite{GS2}. 
\end{remark}
}
    {
\begin{corollary}  Under the same the hypothesis of Proposition \ref{B-P-F-D} we have: 
\begin{align*}  &  
  \int_{\partial J_a^b} \left(\mathfrak K^{\vec{\alpha}}_{\psi, a}(\tau,q)\sigma_{\tau}^{\psi} {}^{\psi} \mathcal I_a^{\tau} [f](q,\tau,\vec{\alpha})  
		+   {}^{\psi} \mathcal I_a^{\tau} [g](q,\tau, \vec{\beta})  
		\sigma_{\tau}^{\psi} \mathfrak K^{\vec{\beta}}_{\psi, a}(\tau,q)\right) \\
		&		- 
\int_{J_a^b} \left(\mathfrak K^{\vec{\alpha}}_{\psi, a}(y,q) {}^{\psi}\mathfrak D_a^{\vec{\alpha}}[f](q,y)   +   
	   {}^{\psi}\mathfrak D_{r,a}^{\vec{\beta}}[g](q,y)	\mathfrak K^{\vec{\beta}}_{\psi, a}(y,q)\right)dy \\
		=  &  4(f+g) (q) +  N[f](q,q, \vec{\alpha})  + N[ g](q,q, \vec{\beta}).
\end{align*}
If ${}^{\psi}\mathfrak D_a^{\vec{\alpha}}[f](q,\cdot) = {}^{\psi}\mathfrak D_{r,a}^{\vec{\alpha}}[g](q,\cdot) =0$ on $J_a^b$, then   
\begin{align*}  &  
  \int_{\partial J_a^b} \left(\mathfrak K^{\vec{\alpha}}_{\psi, a}(\tau,x)  \sigma_{\tau}^{\psi} {}^{\psi}\mathcal I_a^{\tau} [f](q,\tau,\vec{\alpha})  
		+   {}^{\psi} \mathcal I_a^{\tau} [g](q,\tau,\beta)  \sigma_{\tau}^{\psi} \mathfrak K^{\vec{\beta}}_{\psi, a}(\tau,x)  \right) \\
		=  &      \left\{ \begin{array}{ll}  
		\displaystyle \sum_{i=0}^3 (f+g) (q_0, \dots, x_i, \dots, q_3) +  N[f](q,x,\vec{\alpha}) + N[g](q,x,\vec{\beta})
      		, &    x\in 
		J_a^b ,  \\ 0 , &  x\in \mathbb H\setminus\overline{J_a^b}                      
\end{array} \right. 
\end{align*}
and for $x=q$ we have that
\begin{align*}  &  
  \int_{\partial J_a^b} \left(\mathfrak K^{\vec{\alpha}}_{\psi, a}(\tau,q)  \sigma_{\tau}^{\psi} {}^{\psi} \mathcal I_a^{\tau} [f](q,\tau, \vec{\alpha})  
		+   {}^{\psi} \mathcal I_a^{\tau} [g](q,\tau,\vec{\beta}) \sigma_{\tau}^{\psi} \mathfrak K^{\vec{\beta}}_{\psi, a}(\tau,q)\right) \\
		=  &  
4(f+g) (q) + N[f](q,q, \vec{\alpha}) + N[g](q,q,\vec{\beta}). 
\end{align*}
\end{corollary}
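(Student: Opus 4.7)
The plan is to deduce this corollary as a direct specialization of Theorem \ref{B-P-F-D}, exploiting two observations: that setting $x=q$ collapses the diagonal sum on the right-hand side, and that imposing ${}^{\psi}\mathfrak D_a^{\vec{\alpha}}[f](q,\cdot)=0$ and ${}^{\psi}\mathfrak D_{r,a}^{\vec{\beta}}[g](q,\cdot)=0$ removes the volume integral over $J_a^b$. No new analytic machinery is required, so this is essentially a bookkeeping argument.

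First, I would copy the general Borel-Pompieu type identity from Theorem \ref{B-P-F-D} verbatim and specialize $x=q$. The only substantive point at this step is to observe that for $x=q$ we have
\[
\sum_{i=0}^3 (f+g)(q_0,\dots,x_i,\dots,q_3)\Big|_{x=q}=\sum_{i=0}^3 (f+g)(q)=4(f+g)(q),
\]
since the substitution $x_i\mapsto q_i$ trivializes every slot. Likewise $N[f](q,x,\vec{\alpha})$ and $N[g](q,x,\vec{\beta})$ simply become $N[f](q,q,\vec{\alpha})$ and $N[g](q,q,\vec{\beta})$. This immediately yields the first displayed formula of the corollary, because $q\in J_a^b$ places us in the ``interior'' branch of the piecewise right-hand side.

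For the second assertion, I would invoke the hypothesis ${}^{\psi}\mathfrak D_a^{\vec{\alpha}}[f](q,\cdot)\equiv 0$ and ${}^{\psi}\mathfrak D_{r,a}^{\vec{\alpha}}[g](q,\cdot)\equiv 0$ on $J_a^b$. Substituting these zeros into the volume integrals of the formula from Theorem \ref{B-P-F-D} makes the entire $\int_{J_a^b}(\cdots)\,dy$ term vanish, leaving only the boundary integral on the left. The piecewise right-hand side is preserved, giving the second displayed formula of the corollary for arbitrary $x$. Finally, specializing this last identity to $x=q$ and repeating the collapsing argument of the first paragraph produces the concluding identity $4(f+g)(q)+N[f](q,q,\vec{\alpha})+N[g](q,q,\vec{\beta})$.

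There is no real obstacle here: the only point that requires care is keeping the roles of $q$ (the fixed parameter in the fractional integrals) and $x$ (the active variable on which ${}^{\psi}\mathcal D_x$ and the boundary integration act) clearly separated, so that the substitution $x=q$ is interpreted as a pointwise evaluation \emph{after} all differentiations and fractional integrations have been performed. Once this convention is respected, the corollary follows directly from Theorem \ref{B-P-F-D}.
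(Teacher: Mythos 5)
Your proposal is correct and coincides with the intended argument: the paper states this corollary without proof precisely because it is the direct specialization of Theorem \ref{B-P-F-D} that you describe, namely evaluating at $x=q$ (so that $\sum_{i=0}^3 (f+g)(q_0,\dots,q_i,\dots,q_3)=4(f+g)(q)$) and deleting the volume integral when ${}^{\psi}\mathfrak D_a^{\vec{\alpha}}[f](q,\cdot)$ and ${}^{\psi}\mathfrak D_{r,a}^{\vec{\beta}}[g](q,\cdot)$ vanish on $J_a^b$. Your reading of the vanishing hypothesis with $\vec{\beta}$ for $g$ (rather than the $\vec{\alpha}$ appearing in the statement, an apparent typo) is the consistent one, and your caveat about performing the substitution $x=q$ only after all differentiations is exactly the point to keep in mind.
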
 
}
\subsection{An iterated fractional $\Psi$-Fueter operator}\label{3.2}
Now we shall study properties of an iterated fractional $\Psi$-Fueter operator. 

Let $n\in \mathbb N$ and let  $\Psi:= \{\psi_1, \dots, \psi_n\}$ be the collection of $n$ structural sets of $\mathbb H$.  The left and right
 iterated $\Psi$-Fueter operators are denoted by 
$$ {}^{\Psi}\mathcal D   = {}^{\psi_1}\mathcal D  \circ \cdots \circ {}^{\psi_n}\mathcal D,$$
and
$$ {}^{\Psi}\mathcal D _r = {}^{\psi_1}\mathcal D_r  \circ \cdots \circ {}^{\psi_n}\mathcal D_r$$
respectively. We will restrict our attention to the case $\Psi:= \{\psi, \dots, \psi\}$. Motivated by \cite{BSGS}, it is possible to consider the general situation, but we will not develop this point here.

\begin{definition} Let $\vec {\alpha} = (\alpha_0, \alpha_1,\alpha_2,\alpha_3) \in\mathbb C^4$ with  $n= [\Re(\alpha_{\ell})]+1 $ for $\ell=0,1,2,3$. We define the iterated fractional $\Psi$-Fueter operator of order $\vec{\alpha}$ on $AC^n(\overline{J_a^b}, \mathbb H)$ by
\begin{align*} 
{}^{\Psi}\mathfrak D_a^{\vec{\alpha}}[f] (q,x)= {{}^{\Psi}\mathcal D_x } \circ {}^{\psi}  \mathcal I_a^x [f] (q,x, n-\vec{\alpha}), 
 \end{align*}
for all $q,x\in J_a^b$    and similarly 
\begin{align*} 
{}^{\Psi}\mathfrak D_{r,a}^{\vec{\alpha}}[f](q,x) = {}^{\Psi}\mathcal D_{r,x}  \circ {}^{\psi}\mathcal I_a^x [f](q, x,n-\vec{\alpha}), 
\end{align*}
\end{definition}
Here and subsequently   by $\vec 1$ we mean the vector $(1,1,1,1)$. Hence 
$n\vec 1-\vec{\alpha}=(n-\alpha_0, n-\alpha_1,n-\alpha_2,n-\alpha_3)$.

\begin{proposition} 
Consider  $ \vec{\alpha} = (\alpha_0, \alpha_1,\alpha_2,\alpha_3) \in\mathbb C^4$  with  $n= [\Re(\alpha_{\ell})] +1  $ for $\ell=0,1,2,3$. Then 
\begin{enumerate} 	
\item If 
 $f \in  C (J_a^b,\mathbb H)\cup L_2(J_a^b,\mathbb H)$ then
\begin{align*}  &  \displaystyle 
{}^{\Psi}\mathfrak D_a^{\vec{\alpha}} \circ {}^{\psi}\mathfrak I_a^{n\vec 1-\vec{\alpha}} \circ {}^{\psi} \mathcal T ^{(n-1)}[f](q,x) \\ 
= &   \frac{1}{2}  \sum_{j=0}^3  f (q_0,\dots, x_j, \dots, q_3)  +
   \frac{1}{2} \sum_{j=0}^3  {}^{\psi} \mathcal D ^{(n-1)} \circ \overline{ {}^{\psi} \mathcal T ^{(n-1)} [f] }(q_0,\dots, x_j, \dots, q_3) \psi_j  , \end{align*}
where ${}^{\psi} \mathcal T ^{(n-1)}  = {}^{\psi} \mathcal T  \circ \cdots \circ  {}^{\psi} \mathcal T $,   $(n-1)$-times. 	

\item If 
 $f \in AC^n(J_a^b,\mathbb H)$ then  
$$\displaystyle {}^{\bar \psi}\mathcal D_x^{(n)} \circ {}^{\Psi}\mathfrak D_a^{\vec{\alpha}}[f](q,x) = \Delta_{\mathbb R^4}^n\circ {}^{\psi} \mathcal I_a^x [f](q, x,\vec{\alpha}),$$
where $ {}^{\bar \psi}\mathcal D_x^{(n)} =  {}^{\bar \psi}\mathcal D_x \circ \cdots \circ {}^{\bar \psi}\mathcal D_x$ and  
$ \Delta_{\mathbb R^4}^n =  \Delta_{\mathbb R^4}\circ \cdots \circ  \Delta_{\mathbb R^4} $  $n$-times. 

\item Let $\vec{\beta}=(\beta_0, \dots, \beta_3)\in \mathbb C^4$ with $m= [\Re(\beta_{\ell})] +1$ for $\ell=0,1,2,3$.  Let $f \in AC^{n+m}(J_a^b,\mathbb H)$ be such that  the mapping $x\to {}^{\psi}\mathcal I_a^{x} [f](q, x,\vec{\alpha})$ belongs to $C^{n+m}(J_a^b, \mathbb H)$  then    

 \begin{align*} 
{}^{\Psi}\mathfrak D_a^{\vec{\alpha}} \circ {}^{\Psi}\mathfrak D_a^{\vec{\beta}}[f]  (q,x) = & \sum_{j=0}^3 \psi_j^{m+n} D _{a_j^+}^{{\alpha_j +  \beta_j}} f(q_0, \dots, x_j , \dots,  q_3)]   
\end{align*} 
and 
 \begin{align*} 
{}^{\bar\psi}\mathfrak D_a^{\vec{\alpha}} \circ {}^{\Psi}\mathfrak D_a^{\vec{\beta}}[f] (q,x) = & \sum_{j,k=0}^3 
 \psi_j^{-n+m} D _{a_j^+}^{{\alpha_j + \beta_j}} f(q_0, \dots, x_j , \dots,  q_3).
  \end{align*}
\end{enumerate}
\end{proposition}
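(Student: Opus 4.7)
The backbone of all three parts is the compact componentwise identity
\[
{}^{\Psi}\mathfrak D_a^{\vec\alpha}[h](q,x) \;=\; \sum_{k=0}^3 \psi_k^{\,n}\,(D_{a_k^+}^{\alpha_k} h)(q_0,\dots,x_k,\dots,q_3),
\]
which I would prove as a short lemma at the outset. Its derivation unfolds the definition ${}^{\Psi}\mathfrak D_a^{\vec\alpha}={}^{\Psi}\mathcal D_x\circ{}^{\psi}\mathcal I_a^x[\cdot](q,x,n\vec 1-\vec\alpha)$, uses the expansion ${}^{\psi}\mathcal I_a^x[h](q,x,\vec\beta)=\sum_k({\bf I}_{a_k^+}^{\beta_k}h)(q_0,\dots,x_k,\dots,q_3)$ already verified in the proof of Proposition~\ref{propFRACD}(1), and observes that when the polynomial $({}^{\psi}\mathcal D_x)^n=\sum\psi_{k_1}\cdots\psi_{k_n}\partial_{k_1}\cdots\partial_{k_n}$ acts on a function depending only on $x_k$, every multi-index with an off-diagonal coordinate is killed, so only $\psi_k^n\partial_{x_k}^n$ survives, and $\partial_{x_k}^n{\bf I}_{a_k^+}^{n-\alpha_k}=D_{a_k^+}^{\alpha_k}$ by the Riemann--Liouville definition.

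Part~(2) then follows directly from the definition: applying ${}^{\bar\psi}\mathcal D_x^{(n)}$ to both sides of ${}^{\Psi}\mathfrak D_a^{\vec\alpha}[f]={}^{\Psi}\mathcal D_x\circ{}^{\psi}\mathcal I_a^x[f](q,x,n\vec 1-\vec\alpha)$ and invoking the standard quaternionic factorisation ${}^{\bar\psi}\mathcal D\circ{}^{\psi}\mathcal D=\Delta_{\mathbb R^4}$ iterated $n$ times produces $\Delta_{\mathbb R^4}^{\,n}\circ{}^{\psi}\mathcal I_a^x[f]$, which is the stated right-hand side. The smoothness hypothesis $f\in AC^n$ is what permits the mixed $x$-derivatives to commute freely.

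For Part~(1), write $F={}^{\psi}\mathcal T^{(n-1)}[f]$ and $h={}^{\psi}\mathfrak I_a^{n\vec 1-\vec\alpha}[F]=\sum_l\tfrac12{\bf I}_{a_l^+}^{\,n-\alpha_l}[\bar\psi_l F+\bar F\psi_l](q_0,\dots,x_l,\dots,q_3)$, and plug this into the working identity. For each fixed $k$, only the $l=k$ summand carries $x_k$-dependence, and on that diagonal the Fundamental Theorem~\eqref{FundTheorem} collapses the inner pair of integral/derivative operators back to the raw integrand $\tfrac12(\bar\psi_k F+\bar F\psi_k)$. Multiplying by $\psi_k^n$ and splitting into two halves: the $\bar\psi_k F$ half uses $\bar\psi_k\psi_k=1$ together with the Fueter inversion ${}^{\psi}\mathcal D\circ{}^{\psi}\mathcal T=\mathrm{id}$ of~\eqref{FueterInv} iterated $n-1$ times to annihilate the iterated Cauchy potentials inside $F$, producing the $\tfrac12\sum_j f(q_0,\dots,x_j,\dots,q_3)$ contribution; the $\bar F\psi_k$ half retains the conjugation, which swaps left and right actions of $\psi_k$, and therefore survives as the announced $\tfrac12\sum_j{}^{\psi}\mathcal D^{(n-1)}\overline{{}^{\psi}\mathcal T^{(n-1)}[f]}(q_0,\dots,x_j,\dots,q_3)\psi_j$ contribution.

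For Part~(3), apply the working identity twice: write $g={}^{\Psi}\mathfrak D_a^{\vec\beta}[f]=\sum_l\psi_l^m D_{a_l^+}^{\beta_l}[f](q_0,\dots,x_l,\dots,q_3)$, then reapply the identity to $g$ to produce the double sum $\sum_{j,k}\psi_k^n\psi_j^m D_{a_k^+}^{\alpha_k}\bigl[D_{a_j^+}^{\beta_j}f\bigr](q_0,\dots,x_k,\dots,q_3)$; the diagonal $k=j$ terms combine, via the semigroup law $D_{a_j^+}^{\alpha_j}\circ D_{a_j^+}^{\beta_j}=D_{a_j^+}^{\alpha_j+\beta_j}$ which requires exactly the regularity $f\in AC^{n+m}$, to yield the advertised $\psi_j^{n+m}D_{a_j^+}^{\alpha_j+\beta_j}f$; the $\bar\psi$ variant is identical, with $\bar\psi_j^n\psi_j^m=\psi_j^{m-n}$ on the diagonal. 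The \textbf{main obstacle} I foresee is Part~(1): the precise book-keeping by which the $n$ Fueter derivatives packaged inside ${}^{\Psi}\mathcal D$ split between annihilating the $n-1$ Cauchy potentials in ${}^{\psi}\mathcal T^{(n-1)}$ and differentiating its conjugate, together with the sign and ordering changes induced by conjugation on left versus right multiplication by $\psi_k$, is where the two $\tfrac12$ summands genuinely originate and is the step most prone to slip.
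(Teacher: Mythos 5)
Your preliminary lemma, the componentwise identity ${}^{\Psi}\mathfrak D_a^{\vec\alpha}[h](q,x)=\sum_k\psi_k^{\,n}\,(D_{a_k^+}^{\alpha_k}h)(q_0,\dots,x_k,\dots,q_3)$ with $D_{a_k^+}^{\alpha_k}=\partial_{x_k}^n\circ{\bf I}_{a_k^+}^{n-\alpha_k}$, is a faithful unfolding of the definition, and your treatment of parts (2) and (3) is essentially what the paper means by ``direct computations'' (iterate ${}^{\bar\psi}\mathcal D\circ{}^{\psi}\mathcal D=\Delta_{\mathbb R^4}$, respectively compose the componentwise forms and invoke the semigroup law, dropping the same off-diagonal contributions the paper drops in Proposition \ref{propFRACD}). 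The genuine gap is in part (1), exactly where you anticipated trouble. The fractional integral packaged in ${}^{\psi}\mathfrak I_a^{n\vec 1-\vec\alpha}$ acts componentwise as ${\bf I}_{a_k^+}^{n-\alpha_k}$, while your working identity delivers the derivative $D_{a_k^+}^{\alpha_k}$ of order $\alpha_k$; the Fundamental Theorem \eqref{FundTheorem} only collapses compositions with \emph{matching} orders, and here $D_{a_k^+}^{\alpha_k}\circ{\bf I}_{a_k^+}^{n-\alpha_k}=\partial_{x_k}^n\circ{\bf I}_{a_k^+}^{2(n-\alpha_k)}$, which is not the identity unless $2\alpha_k=n$. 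So the step ``the Fundamental Theorem collapses the inner pair back to the raw integrand'' does not follow within your own (standard) convention for $D^{\alpha}$.

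There is a second, related accounting problem: in your route all $n$ copies of ${}^{\psi}\mathcal D$ are already consumed in forming $D_{a_k^+}^{\alpha_k}$, so even granting the collapse you would be left with $\tfrac12\sum_k\psi_k^{\,n}(\bar\psi_kF+\bar F\psi_k)=\tfrac12\sum_k\psi_k^{\,n-1}F+\tfrac12\sum_k\psi_k^{\,n}\bar F\psi_k$, where $F={}^{\psi}\mathcal T^{(n-1)}[f]$ is still intact; there is no leftover ${}^{\psi}\mathcal D^{(n-1)}$ to invert the Cauchy potentials via \eqref{FueterInv}, no way to generate the announced factor ${}^{\psi}\mathcal D^{(n-1)}\circ\overline{{}^{\psi}\mathcal T^{(n-1)}[f]}$, and the stray powers $\psi_k^{\,n-1},\psi_k^{\,n}$ do not occur in the statement. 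The paper proceeds differently and this is what you need: it splits ${}^{\Psi}\mathcal D_x={}^{\psi}\mathcal D^{(n-1)}\circ{}^{\psi}\mathcal D$, recognizes the single inner ${}^{\psi}\mathcal D\circ{}^{\psi}\mathcal I_a^x(\cdot\,,n\vec 1-\vec\alpha)$ as the first-order fractional operator of order $n\vec 1-\vec\alpha$ (Proposition \ref{propFRACD}, item 1), cancels it against ${}^{\psi}\mathfrak I_a^{n\vec 1-\vec\alpha}$ by the quaternionic fundamental theorem (Proposition \ref{propFRACD}, item 2) to get $\sum_j\psi_jF_j(q_0,\dots,x_j,\dots,q_3)$, and only afterwards lets the remaining ${}^{\psi}\mathcal D^{(n-1)}$ act, producing the $\tfrac12\sum_j f$ term through \eqref{FueterInv} and the conjugate term from the other half of $F_j=\tfrac12(\bar\psi_jF+\bar F\psi_j)$. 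Without this peeling off of one Fueter derivative (or an equivalent re-matching of the fractional orders), your part (1) does not reach the stated right-hand side.
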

\begin{proof} 
\begin{enumerate} 
\item Note that 
\begin{align*} 
&  {}^{\Psi}\mathfrak D_a^{\vec{\alpha}} \circ  {}^{\psi} \mathfrak I_a^{n\vec 1-\vec{\alpha}} \circ {}^{\psi} \mathcal T ^{(n-1)}[f](q,x) \\
 = & ( {}^{\psi} \mathcal D ^{(n-1)} \circ {}^{\psi} \mathcal D \circ {}^{\psi} \mathcal I_a^x)[{}^{\psi} \mathfrak I_a^{n\vec 1-\vec{\alpha}} [ {}^{\psi} \mathcal T ^{(n-1)}(f)] ] (q, x, n\vec 1-\vec{\alpha}) \\
  = & {}^{\psi} \mathcal D ^{(n-1)} \circ ( {}^{\Psi}\mathfrak D_a^{n\vec 1-\vec{\alpha}} \circ {}^{\psi} \mathfrak I_a^{n\vec 1-\vec{\alpha}}) [{}^{\psi} \mathcal T ^{(n-1)}(f)](q, x ) \\
  =  &  {}^{\psi} \mathcal D ^{(n-1)} [\sum_{j=0}^3 \psi_j ( {}^{\psi} \mathcal T ^{(n-1)}(f) \ )_j(q_0,\dots, x_j, \dots, q_3)] \\
 = & \frac{1}{2} {}^{\psi} \mathcal D ^{(n-1)}[\sum_{j=0}^3 \psi_j ( \bar \psi_j {}^{\psi} \mathcal T ^{(n-1)}(f) (q_0,\dots, x_j, \dots, q_3) \\
 &  +  \sum_{j=0}^3 \overline{ {}^{\psi} \mathcal T ^{(n-1)}(f)} (q_0,\dots, x_j, \dots, q_3)\psi_j]  \\
  =  &  \frac{1}{2}  [\sum_{j=0}^3   {}^{\psi} \mathcal D ^{(n-1)} \circ {}^{\psi} \mathcal T ^{(n-1)}(f) (q_0,\dots, x_j, \dots, q_3) \\ 
 &   +
  \sum_{j=0}^3 {}^{\psi} \mathcal D ^{(n-1)} \circ  \overline{ {}^{\psi} \mathcal T ^{(n-1)}(f)} (q_0,\dots, x_j, \dots, q_3) \psi_j ]  \\
  =  & \frac{1}{2}[\sum_{j=0}^3  f (q_0,\dots, x_j, \dots, q_3) \\ 
   &  +
  \sum_{j=0}^3 {}^{\psi} \mathcal D ^{(n-1)} \circ\overline{{}^{\psi} \mathcal T ^{(n-1)}(f) }(q_0,\dots, x_j, \dots, q_3)\psi_j]. 
\end{align*} 
The proof is completed by using Proposition \ref{propFRACD} and \eqref{FueterInv}.

\item The verification of 2. and 3. are reduced to direct computations. 
\end{enumerate} 
\end{proof}

\begin{proposition}\label{B-P-H-O} (The Borel-Pompieu formula of higher order)
Let $J_1 , \dots , J_n\subset\mathbb H$ be a sequence of open bounded rectangles such that $ J_{k }\supset \overline{J_{k+1}}$  for $k=1,\dots, n-1$.  
Let $f:\overline{J_1}\to \mathbb H$ such that ${}^{\psi}\mathcal D^{(n-{\ell} )} [f]\in C^1({J_{\ell}}, \mathbb H)\cap C (\overline{J_{\ell}}, \mathbb H) $  for ${\ell}=1,\dots, n$ then 
\begin{align*}  &  \int_{\partial J_n} K_{\psi}(y_n  -x)\sigma_{y_n}^{\psi}f(y_n)   \\
   - &
\displaystyle \int_{J_n \times \partial J_{n-1}} K_{\psi} (y_n -x)
     K_{\psi}(y_{n-1}-y_n)\sigma_{y_{n-1}}^{\psi}  \ {}^{\psi}\mathcal D^{(1)}[f](y_{n-1})d\mu_{y_n} 
  \\
	+ & \displaystyle  \int_{J_n  \times J_{n-1} \times \partial J_{n-2}} K_{\psi} (y_n -x) K_{\psi} (y_{n-1} -y_n)
  		    K_{\psi}(y_{n-2} -y_{n-1})\sigma_{y_{n-2}}^{\psi} \\
					&  \  \hspace{3cm}  {}^{\psi}\mathcal D^{(2 )}[f](y_{n-2})d\mu_{y_{n-1}}d\mu_{y_n}  \\ 
						 +			& \\
					& \vdots					
					  \\
	+ & (-1)^{n-1}  \displaystyle  \int_{J_n  \times J_{n-1}\times \cdots  \times J_2 \times \partial J_1 } K_{\psi} (y_n -x) 
	K_{\psi} (y_{n-1}  -y_n) \cdots  K_{\psi} (y_2  -y_3)  \\
		 & \  \hspace{2cm} 
  		    K_{\psi}(y_1  -y_2)\sigma_{y_1}^{\psi}  \ {}^{\psi}\mathcal D^{(n -1)}[f](y_1)d\mu_{y_2}d\mu_{y_3}\cdots d\mu_{y_{n-1}}d\mu_{y_n}
		\\	
 +  & (-1)^{n} \displaystyle  \int_{J_{n}\times J_{n-1} \times \cdots \times J_2\times J_1} 
 K_{\psi} (y_n  -x)  K_{\psi} (y_{n-1}  -y_n) \cdots K_{\psi} (y_2  -y_3) \\
 & \  \hspace{2cm} 
  K_{\psi} (y_1  -y_2) {}^{\psi}\mathcal D^{(n  )} [f] (y_1  ) d\mu_{y_1} 
			d\mu_{y_2} \cdots d\mu_{y_{n-1}}d\mu_{y_n } \\
		=  &      \left\{ \begin{array}{ll}  f(x), &  x\in J_n  ,  \\ 0 , &  x\in \mathbb H\setminus\overline{J_n},                    
\end{array} \right. 
\end{align*} 
\end{proposition}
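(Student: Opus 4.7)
The plan is to proceed by iterated application of the classical Borel--Pompeiu formula \eqref{BorelHyp}, with induction on $n$. The base case $n=1$ is precisely \eqref{BorelHyp} applied on $\Omega=J_1$ to $f$ (with $g\equiv 0$ in that formula): there is the boundary term on $\partial J_1$ and, with sign $(-1)^1$, the volume term carrying ${}^{\psi}\mathcal{D}^{(1)}[f]$ on $J_1$.

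For the inductive step, assuming the formula for chains of length $n-1$, I would first invoke \eqref{BorelHyp} on $J_n$ directly for $f$, which yields, for $x\in J_n$,
\begin{align*}
f(x) \;=\; \int_{\partial J_n} K_\psi(y_n-x)\,\sigma_{y_n}^{\psi}\,f(y_n)
\;-\; \int_{J_n} K_\psi(y_n-x)\,{}^{\psi}\mathcal{D}[f](y_n)\,d\mu_{y_n},
\end{align*}
and is identically $0$ for $x\in \mathbb{H}\setminus\overline{J_n}$ (this accounts for the two cases on the right). The first summand is already the top line of the claimed identity. For the second summand, I exploit the nesting hypothesis: since $y_n\in J_n\subset \overline{J_n}\subset J_{n-1}$, the point $y_n$ is interior to $J_{n-1}$, so one may apply the induction hypothesis to the function ${}^{\psi}\mathcal{D}[f]$ on the shorter chain $J_1\supset\overline{J_2}\supset\cdots\supset\overline{J_{n-1}}$ at the interior point $y_n$, using the regularity ${}^{\psi}\mathcal{D}^{(n-\ell)}[{}^{\psi}\mathcal{D}f]={}^{\psi}\mathcal{D}^{(n-\ell+1)}[f]\in C^1(J_{\ell-1})\cap C(\overline{J_{\ell-1}})$ guaranteed by the assumption on $f$.

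Substituting the resulting expansion of ${}^{\psi}\mathcal{D}[f](y_n)$ back into $\int_{J_n} K_\psi(y_n-x)\,{}^{\psi}\mathcal{D}[f](y_n)\,d\mu_{y_n}$ and using Fubini's theorem to commute the outer $y_n$-integration past the nested iterated integrals produces exactly the remaining lines of the claimed formula, with the correct kernel chain $K_\psi(y_n-x)K_\psi(y_{n-1}-y_n)\cdots K_\psi(y_1-y_2)$. The global sign pattern also matches: the induction hypothesis contributes signs $(-1)^0,(-1)^1,\ldots,(-1)^{n-1}$ for its successive terms, and multiplying by the leading $-1$ from the volume term produced in the first step shifts these to $(-1)^1,(-1)^2,\ldots,(-1)^n$.

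The main obstacle will be purely organisational, not conceptual: tracking the precise matching of integration variables $y_1,\ldots,y_n$ and the left-to-right order of the Cauchy kernels (which must be preserved because quaternionic multiplication is noncommutative), and verifying that Fubini applies at each step. The latter is standard once one notes that $K_\psi(y_{k-1}-y_k)$ is integrable in $y_{k-1}$ over $J_{k-1}$ uniformly in $y_k\in \overline{J_k}$ (because $\overline{J_k}\subset J_{k-1}$ keeps the singularity of $K_\psi$ away from $\partial J_{k-1}$ when integrating on the boundary, and the interior singularity is the usual integrable one for the Cauchy kernel in $\mathbb{R}^4$), together with the continuity of the iterated derivatives ${}^{\psi}\mathcal{D}^{(n-\ell)}[f]$ on each $\overline{J_\ell}$.
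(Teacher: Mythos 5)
Your proposal is correct and is essentially the paper's own argument: an inductive, iterated application of the classical Borel--Pompeiu formula \eqref{BorelHyp} to the nested rectangles, substituting the representation of the successive $\psi$-Fueter derivatives into the volume terms and invoking Fubini. The only (cosmetic) difference is the direction of the bookkeeping — you peel off the innermost rectangle $J_n$ and apply the induction hypothesis to ${}^{\psi}\mathcal D[f]$ on the shorter chain $J_1,\dots,J_{n-1}$, whereas the paper writes the level-$\ell$ formulas for ${}^{\psi}\mathcal D^{(n-\ell)}[f]$ on each $J_\ell$ and composes them starting from $J_1$ before concluding by induction.
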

 \begin{proof}
Fixing each $\ell=1,\dots,n$, we can assert that 
\begin{align*}  &  \int_{\partial J_{\ell}} K_{\psi}(y_{\ell}-x)\sigma_{y_{\ell}}^{\psi}  \ {}^{\psi}\mathcal D^{(n -{\ell}) }[f](y_{\ell})  
   - 
\int_{J_{\ell} }  K_{\psi} (y_{\ell}-x) {}^{\psi}\mathcal D^{(n+1-{\ell})} [f] (y_{\ell})d\mu_{y_{\ell}}  \\
		=  &      \left\{ \begin{array}{ll}  {}^{\psi}\mathcal D^{(n -{\ell})} f(x), &  x\in J_{\ell},  \\ 0 , &  x\in \mathbb H\setminus\overline{J_{\ell}},                    
\end{array} \right. 
\end{align*} 
where ${}^{\psi}\mathcal D^{0} = {}^{\psi}\mathcal D_r^{0}$ is  the identity operator.
		
Particularly, for ${\ell}=1$ we obtain 
\begin{align*}  &  \int_{\partial J_1}   K_{\psi}(y_1 -x)\sigma_{y_1}^{\psi} \ {}^{\psi}\mathcal D^{(n -1)}[f](y_1)  
   - 
\int_{J_1}  K_{\psi} (y_1  -x) {}^{\psi}\mathcal D^{(n)} [f] (y_1)d\mu_{y_1}  \\
		=  &      \left\{ \begin{array}{ll}  {}^{\psi}\mathcal D^{(n -1)} f(x), &  x\in J_1  ,  \\ 0 , &  x\in \mathbb H\setminus\overline{J_1} ,                    
\end{array} \right. 
\end{align*} 
for  ${\ell}=2$:
\begin{align*}  &  \int_{\partial J_2} K_{\psi}(y_2 -x)\sigma_{y_2}^{\psi}  \ {}^{\psi}\mathcal D^{(n -2) }[f](y_2)  
   - 
\int_{J_2   }  K_{\psi} (y_2 -x)
  {}^{\psi}\mathcal D^{(n-1)}[f] (y_2)d\mu_{y_2}  \\
		=  &      \left\{ \begin{array}{ll}  {}^{\psi}\mathcal D^{(n -2)}f(x), &  x\in J_2  ,  \\ 0 , &  x\in \mathbb H\setminus\overline{J_2  } ,                    
\end{array} \right. 
\end{align*} 
and for  ${\ell}=3$
\begin{align*}  &  \int_{\partial J_3} K_{\psi}(y_3 -x)\sigma_{y_3}^{\psi}  \ {}^{\psi}\mathcal D^{(n -3) }[f](y_3)  
   - 
\int_{J_3}  K_{\psi} (y_3 -x)
  {}^{\psi}\mathcal D^{(n-2)} [f] (y_3)d\mu_{y_3}  \\
		=  &      \left\{ \begin{array}{ll}  {}^{\psi}\mathcal D^{(n -3)}f(x), &  x\in J_3  ,  \\ 0 , &  x\in \mathbb H\setminus\overline{J_3  } .                    
\end{array} \right. 
\end{align*} 
Combining the previous representation formulas yields
\begin{align*}  &  \int_{\partial J_3} K_{\psi}(y_3 -x)\sigma_{y_3}^{\psi}  \ {}^{\psi}\mathcal D^{(n -3) }[f](y_3)   \\
   - &
\int_{J_3 \times \partial J_2}  K_{\psi} (y_3 -x)
     K_{\psi}(y_2  -y_3)\sigma_{y_2}^{\psi}  \ {}^{\psi}\mathcal D^{(n -2) }[f](y_2)d\mu_{y_3  } 
  \\
	+ & \int_{J_3  \times J_2 \times \partial J_1}  K_{\psi} (y_3  -x)  K_{\psi} (y_2  -y_3)
  		    K_{\psi}(y_1  -y_2)\sigma_{y_1}^{\psi}  \ {}^{\psi}\mathcal D^{(n -1)   }[f](y_1  )  d\mu_{y_2  } 	d\mu_{y_3  }  \\ 
 -  & \int_{J_3\times J_2\times J_1}  K_{\psi} (y_3  -x)  K_{\psi} (y_2  -y_3) K_{\psi} (y_1  -y_2) {}^{\psi}\mathcal D^{(n)}[f](y_1)d\mu_{y_1} 
			d\mu_{y_2}d\mu_{y_3}  \\
		=  &     \left\{ \begin{array}{ll}  {}^{\psi}\mathcal D^{(n -3)}f(x), &  x\in J_3  ,  \\ 0 , &  x\in \mathbb H\setminus\overline{J_3}.  
\end{array} \right. 
\end{align*} 
We next proceed by induction to obtain the result.
\end{proof}

\begin{remark}In the same manner we can see that, given $g:\overline{J_1}\to \mathbb H$ such that
$${}^{\psi}\mathcal D_r^{(n -{\ell})}[g]\in C^1({J_{\ell}}, \mathbb H)\cap C (\overline{J_{\ell}}, \mathbb H),$$  
for ${\ell}=0,\dots, n$, we get
\begin{align*}  &  \int_{\partial J_n}     
g(y_n)
\sigma_{y_n}^{\psi} K_{\psi}(y_n -x) \\
   - &
\displaystyle \int_{J_n \times \partial J_{n-1}} {}^{\psi}\mathcal D_r^{(1)}[g](y_{n-1})  \sigma_{y_{n-1}}^{\psi} K_{\psi}(y_{n-1}-y_n) 
 K_{\psi} (y_n  -x) d\mu_{y_n} 
  \\
	+ & \displaystyle  \int_{J_n  \times J_{n-1} \times \partial J_{n-2}}	{}^{\psi}\mathcal D_r^{(2 )}[g](y_{n-2})
	\sigma_{y_{n-2}  }^{\psi} K_{\psi}(y_{n-2}  -y_{n-1})	K_{\psi} (y_{n-1} -y_n)
									\\
					&  \  \hspace{3cm} 
						K_{\psi} (y_n -x) d\mu_{y_{n-1}}	d\mu_{y_n}  \\ 
						 +			& \\
					& \vdots					
					  \\
	+ & (-1)^{n-1}  \displaystyle  \int_{J_n  \times J_{n-1}\times \cdots  \times J_2 \times \partial J_1}  
	{}^{\psi}\mathcal D_r^{(n -1)}[g](y_1) K_{\psi}(y_1 -y_2)\sigma_{y_1}^{\psi} 
	\\
	& \  \hspace{2cm} 
	 K_{\psi} (y_2  -y_3) \cdots K_{\psi} (y_{n-1}  -y_n) 
		K_{\psi} (y_n  -x) 
					d\mu_{y_2  } 	d\mu_{y_3  }\cdots   d\mu_{y_{n-1}  } 	d\mu_{y_n  }
		\\	
 +  & (-1)^{n} \displaystyle  \int_{J_{n}\times J_{n-1} \times \cdots \times J_2\times J_1   }
{}^{\psi}\mathcal D_r^{(n  )} [g] (y_1  ) 
  K_{\psi} (y_1  -y_2)\\
	 &  \  \hspace{2cm} 
K_{\psi} (y_2  -y_3)
\cdots 
  K_{\psi} (y_{n-1} -y_n)
 K_{\psi} (y_n  -x)  
     d\mu_{y_1} 
			d\mu_{y_2}  \cdots 	d\mu_{y_{n-1}  } 
		d\mu_{y_n } \\
		=  &      \left\{ \begin{array}{ll}    g(x)   , &  x\in J_n  ,  \\ 0 , &  x\in \mathbb H\setminus\overline{J_n  } ,                    \end{array} \right. 
		\end{align*} 
\end{remark}

\begin{corollary}
Let $J_1 , \dots , J_n\subset\mathbb H$ be a sequence of open bounded rectangles such that 
$ J_{k}\supset \overline{J_{k+1}}$  for $k=0,\dots n$ and set $f:\overline{J_1}\to \mathbb H$ 
such that  such    $g_q(x)= {}^{\psi}\mathcal I_a^{x} [f](q, x, \alpha )  $, for all $x\in J_1$, satisfies  
the hypothesis of Proposition  \ref{B-P-F-D}, i.e., ${}^{\psi}\mathcal D^{(n   -{\ell} )} [g_q]\in C^1({J_{\ell}}, \mathbb H)\cap C (\overline{J_{\ell}}, \mathbb H) $  for ${\ell}=1,\dots, n$ and 
 also  $ f\mid_{\bar{ J}_n} \in AC^1(\overline{J_n}, \mathbb H)$ satisfies that the mapping $x\to {}^{\psi} \mathcal I_a^x [f](q,x,\vec{\alpha})$, 
 for all $x\in J_a^b = J_n$, belongs to  $C^1(\overline{J_n}, \mathbb H(\mathbb C))$.  Then
\begin{align*}  &  \int_{\partial J_n} \mathfrak K^{\vec{\alpha}}_{\psi, a} (y_n -x)\sigma_{y_n}^{\psi} {}^{\psi} \mathcal I_a^{y_{n}} [f](q,y_{n}, \vec{\alpha})   \\
   - &
\displaystyle \int_{J_n \times \partial J_{n-1}} \mathfrak K^{\vec{\alpha}}_{\psi, a} (y_n -x)
     K_{\psi}(y_{n-1} -y_n)\sigma_{y_{n-1}}^{\psi}  \
		{}^{\Psi}\mathfrak D_a^{\vec{\alpha}-(n-1)\vec 1}[f](q,y_{n-1})d\mu_{y_n } 
  \\
	+ & \displaystyle  \int_{J_n  \times J_{n-1} \times \partial J_{n-2}} \mathfrak K^{\vec{\alpha}}_{\psi, a} (y_n -x) K_{\psi} (y_{n-1} -y_n)
  		    K_{\psi}(y_{n-2} -y_{n-1})\sigma_{y_{n-2}}^{\psi} \\
					&  \  \hspace{3cm} 		{}^{\Psi}\mathfrak D_a^{\vec{\alpha}-(n-2)\vec 1}[f](q,y_{n-2})d\mu_{y_{n-1}}d\mu_{y_n}  \\ 
		 +			& \\
					 & \vdots					
					  \\
	+ & (-1)^{n-1}  \displaystyle  \int_{J_n  \times J_{n-1}\times \cdots  \times J_2 \times \partial J_1}  
\mathfrak K^{\alpha}_{\psi, a} (y_n -x) K_{\psi} (y_{n-1}  -y_n) \cdots  K_{\psi} (y_2  -y_3)  \\
		 & \  \hspace{2cm} 
  		    K_{\psi}(y_1 -y_2)\sigma_{y_1}^{\psi}  \ {}^{\Psi}\mathfrak D_a^{\vec{\alpha}- \vec 1}[f](q,y_1) d\mu_{y_2}d\mu_{y_3}\cdots d\mu_{y_{n-1}}d\mu_{y_n}
		\\	
 +  & (-1)^{n} \displaystyle  \int_{J_{n}\times J_{n-1} \times \cdots \times J_2\times J_1} 
 \mathfrak K^{\alpha}_{\psi, a} (y_n  -x)  K_{\psi} (y_{n-1} -y_n) \cdots K_{\psi} (y_2  -y_3) \\
 & \  \hspace{2cm} 
  K_{\psi} (y_1  -y_2) \  {}^{\Psi}\mathfrak D_a^{\vec{\alpha}}[f](q,y_1) d\mu_{y_1} 
			d\mu_{y_2  }  \cdots 	d\mu_{y_{n-1}} d\mu_{y_n} \\
		=  &      \left\{ \begin{array}{ll}  
		\displaystyle \sum_{i=0}^3 f(q_0, \dots, x_i, \dots, q_3) + N[f](q,x, \vec{\alpha}) 
      		, &    x\in 
		J_n ,  \\ 0 , &  x\in \mathbb H\setminus\overline{J_n}.                    
\end{array} \right. 
\end{align*} 

\end{corollary}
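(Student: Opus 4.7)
The plan is to apply the higher-order Borel--Pompieu formula (Proposition \ref{B-P-H-O}) to the auxiliary function
$g_q(x) := {}^{\psi}\mathcal I_a^x[f](q,x,\vec{\alpha})$ and then re-dress the resulting representation using the fractional machinery developed in Proposition \ref{propFRACD}, together with the argument driving the proof of Theorem \ref{B-P-F-D}. The hypotheses have been designed so that $g_q$ satisfies the regularity required by Proposition \ref{B-P-H-O}; applying that proposition yields an expansion of $g_q(x)$ (vanishing outside $\overline{J_n}$) as an alternating sum of $n$ boundary integrals and one volume integral involving the classical $\psi$-hyperholomorphic Cauchy kernel $K_\psi$ together with the successive integer-order Fueter derivatives $({}^\psi\mathcal D)^{k}[g_q]$ for $k=0,1,\ldots,n$.

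Next I would translate each $({}^\psi\mathcal D)^{k}[g_q]$ into an iterated fractional $\psi$-Fueter operator acting on $f$. By Proposition \ref{propFRACD}(1) one already has ${}^{\psi}\mathcal D\circ g_q = {}^{\psi}\mathfrak D_a^{\vec\alpha}[f]$. Iterating, and using the key observation that $D_{a_j^+}^{\alpha_j}f(q_0,\ldots,y_j,\ldots,q_3)$ depends on $y$ only through $y_j$ so that every off-diagonal Fueter partial vanishes (exactly as in the proof of part (4) of that proposition), one verifies inductively that
\begin{equation*}
({}^\psi\mathcal D)^{n-\ell}[g_q](q,y) \;=\; {}^{\Psi}\mathfrak D_a^{\vec{\alpha}-(n-\ell)\vec 1}[f](q,y),
\end{equation*}
matching the exponents printed in the statement. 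In this way the surface integral at $\partial J_{\ell}$ already takes the exact form displayed in the conclusion of the corollary, apart from the factor $K_\psi$ which still stands in place of $\mathfrak K^{\vec\alpha}_{\psi,a}$.

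Finally I would apply the operator $\sum_{i=0}^{3}D_{a_i^+}^{\alpha_i}$, acting in the outer variable $x$, to both sides of the identity obtained in the previous step. On the left-hand side, only the factor $K_\psi(y_n-x)$ depends on $x$; Leibniz's rule together with Fubini (justified just as in the proof of Theorem \ref{B-P-F-D}) lets one pull this fractional derivative under the nested integrals, and the kernel is thereby converted into $\mathfrak K^{\vec\alpha}_{\psi,a}(y_n,x)$ by the very definition of the latter. On the right-hand side, the computation carried out in the proof of Theorem \ref{B-P-F-D} -- combining the fundamental theorem of Riemann--Liouville calculus \eqref{FundTheorem} with the identity \eqref{cte} -- gives
\begin{equation*}
\sum_{i=0}^{3}D_{a_i^+}^{\alpha_i}\,g_q(x)\;=\;\sum_{i=0}^{3}f(q_0,\ldots,x_i,\ldots,q_3)+N[f](q,x,\vec\alpha)
\end{equation*}
for $x\in J_n$, and $0$ for $x\in\mathbb H\setminus\overline{J_n}$, which is precisely the prescribed right-hand side.

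The main obstacle will be the bookkeeping of fractional orders at each level of iteration: one must keep precise track of how the inner integral of order $\vec\alpha$ combines with the successive integer-order Fueter differentiations to produce exactly the shifted iterated fractional operators ${}^{\Psi}\mathfrak D_a^{\vec\alpha-(n-\ell)\vec 1}[f]$ required by the statement, while simultaneously verifying layer by layer that the outer operator $\sum_i D_{a_i^+}^{\alpha_i}$ legitimately commutes with the nested integrals over $J_n\times J_{n-1}\times\cdots\times\partial J_\ell$. An inductive argument modelled on the closing steps of the proof of Proposition \ref{B-P-H-O} should handle both at once.
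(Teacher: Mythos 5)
Your proposal is essentially the paper's own proof: the paper likewise just applies Proposition \ref{B-P-H-O} to the mapping $x\mapsto{}^{\psi}\mathcal I_a^{x}[f](q,x,\vec{\alpha})$ and then acts with $\sum_{i=0}^{3}D_{a_i^+}^{\alpha_i}$ on both sides, evaluating the right-hand side exactly as in the proof of Theorem \ref{B-P-F-D} (fundamental theorem plus \eqref{cte}) and pulling the fractional derivative under the nested integrals to turn $K_{\psi}$ into $\mathfrak K^{\vec{\alpha}}_{\psi,a}$. One labeling caveat: in the statement the term over $\partial J_{\ell}$ carries $({}^{\psi}\mathcal D)^{(n-\ell)}$ applied to ${}^{\psi}\mathcal I_a^{\cdot}[f](q,\cdot,\vec{\alpha})$ but is denoted ${}^{\Psi}\mathfrak D_a^{\vec{\alpha}-\ell\vec 1}[f]$ (so $k$ Fueter derivatives correspond to the order $\vec{\alpha}-(n-k)\vec 1$), not ${}^{\Psi}\mathfrak D_a^{\vec{\alpha}-(n-\ell)\vec 1}[f]$ as your displayed identification asserts --- a harmless index slip, given that the strategy and all substantive steps coincide with the paper's one-line argument.
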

\begin{proof}Applying Proposition \ref{B-P-H-O} to the mapping $x\to  {}^{\psi}\mathcal I_a^{x} [f](q, x, \vec{\alpha})$ and acting on both sides the operator 
$\sum_{i=0}^3  D _{a_i^+}^{\alpha_i} $ the proof is complete.
\end{proof}

\begin{corollary}The following assertions may be proved in much the same way as before.
\begin{enumerate}
\item If  $q\in J_n$ doing  $x=q$ we obtain  
\begin{align*}  &  \int_{\partial J_n}   \mathfrak K^{\vec{\alpha}}_{\psi, a} (y_n -q)\sigma_{y_n}^{\psi} {}^{\psi} \mathcal I_a^{y_{n}} [f](q,y_{n}, \vec{\alpha})  \\
   - &
\displaystyle \int_{J_n \times \partial J_{n-1}} \mathfrak K^{\vec{\alpha}}_{\psi, a} (y_n -q) K_{\psi}(y_{n-1} -y_n)\sigma_{y_{n-1}}^{\psi}  \
		{}^{\Psi}\mathfrak D_a^{\vec{\alpha}-(n-1)\vec 1}[f](q,y_{n-1}) d\mu_{y_n} 
  \\
	+ & \displaystyle  \int_{J_n  \times J_{n-1} \times \partial J_{n-2}} \mathfrak K^{\vec{\alpha}}_{\psi, a}(y_n -q) K_{\psi} (y_{n-1} -y_n)
  		    K_{\psi}(y_{n-2} -y_{n-1})\sigma_{y_{n-2}}^{\psi} \\
					&  \  \hspace{3cm} 	{}^{\Psi}\mathfrak D_a^{\vec{\alpha}-(n-2)\vec 1}[f](q,y_{n-2}) d\mu_{y_{n-1}}d\mu_{y_n}  \\ 
		 +			& \\
					 & \vdots					
					  \\
	+ & (-1)^{n-1} \displaystyle  \int_{J_n  \times J_{n-1}\times \cdots  \times J_2 \times \partial J_1}  
\mathfrak K^{\vec{\alpha}}_{\psi, a} (y_n -q) K_{\psi} (y_{n-1} -y_n) \cdots  K_{\psi} (y_2 -y_3)  \\
		 & \  \hspace{2cm} 
  		    K_{\psi}(y_1 -y_2)\sigma_{y_1}^{\psi}  \ {}^{\Psi}\mathfrak D_a^{\vec{\alpha}-\vec 1}[f](q,y_1)d\mu_{y_2}d\mu_{y_3}\cdots d\mu_{y_{n-1}} d\mu_{y_n}
		\\	
 +  & (-1)^{n} \displaystyle  \int_{J_{n}\times J_{n-1} \times \cdots \times J_2\times J_1} 
 \mathfrak K^{\vec{\alpha}}_{\psi, a} (y_n -q)  K_{\psi} (y_{n-1} -y_n) \cdots  K_{\psi} (y_2 -y_3) \\
 & \  \hspace{2cm} 
  K_{\psi} (y_1 -y_2) \ 
  {}^{\Psi}\mathfrak D_a^{\vec{\alpha}}[f](q,y_1) d\mu_{y_1} d\mu_{y_2}  \cdots 	d\mu_{y_{n-1}}d\mu_{y_n }  \\
		=  &      \left\{ \begin{array}{ll}  
		4f(q) + N[f](q,q, \vec{\alpha}) 
      		, &    x\in 
		J_n ,  \\ 0 , &  x\in \mathbb H\setminus\overline{J_n},                    
\end{array} \right. 
\end{align*} 

\item If ${}^{\Psi}\mathfrak D_a^{\vec{\alpha}}[f](q,x)=0$ for all $x\in J_1$ then  
\begin{align*}  &  \int_{\partial J_n}  \mathfrak K^{\vec{\alpha}}_{\psi, a} (y_n -x)\sigma_{y_n}^{\psi} {}^{\psi} \mathcal I_a^{y_{n}}[f](q,y_{n}, \vec{\alpha})   \\
   - &
\displaystyle \int_{J_n \times \partial J_{n-1}} \mathfrak K^{\vec{\alpha}}_{\psi, a} (y_n -x) K_{\psi}(y_{n-1} -y_n)\sigma_{y_{n-1}}^{\psi}  \
		{}^{\Psi}\mathfrak D_a^{\vec{\alpha}-(n-1)}[f](q,y_{n-1})d\mu_{y_n} 
  \\
	+ & \displaystyle  \int_{J_n  \times J_{n-1} \times \partial J_{n-2}}  \mathfrak K^{\vec{\alpha}}_{\psi, a} (y_n -x) K_{\psi} (y_{n-1} -y_n)
    K_{\psi}(y_{n-2} -y_{n-1})\sigma_{y_{n-2}}^{\psi} \\
					&  \  \hspace{3cm} 	{}^{\Psi}\mathfrak D_a^{\vec{\alpha}-(n-2)}[f](q,y_{n-2}) d\mu_{y_{n-1}} d\mu_{y_n}  \\ 
		 +			& \\
					 & \vdots					
					  \\
	+ & (-1)^{n-1}  \displaystyle  \int_{J_n  \times J_{n-1}\times \cdots  \times J_2 \times \partial J_1}  \mathfrak K^{\vec{\alpha}}_{\psi, a} (y_n -x) 
	K_{\psi} (y_{n-1} -y_n) \cdots  K_{\psi} (y_2 -y_3)  \\
		 & \  \hspace{2cm} 
  		 K_{\psi}(y_1 -y_2)\sigma_{y_1}^{\psi}  \ {}^{\Psi}\mathfrak D_a^{\vec{\alpha}-1}[f](q,y_1) d\mu_{y_2} d\mu_{y_3}\cdots d\mu_{y_{n-1}} d\mu_{y_n}
		\\
		=  &      \left\{ \begin{array}{ll}  
		\displaystyle \sum_{i=0}^3 f (q_0, \dots, x_i, \dots, q_3) + N[f](q,x, \vec{\alpha}) 
      		, &    x\in 
		J_n ,  \\ 0 , &  x\in \mathbb H\setminus\overline{J_n},                    
\end{array} \right. 
\end{align*}
		
\item If ${}^{\Psi}\mathfrak D_a^{\vec{\alpha}-k\vec 1}[f](q,x)=0$ for all $x \in \partial J_{k}$, for $k=1,\dots, n-1$, and   
${}^{\Psi}\mathfrak D_a^{\vec{\alpha}}[f](q,x) =0$ for all $x \in  J_{1}$  we obtain 
\begin{align*}  &  \int_{\partial J_n} \mathfrak K^{\vec{\alpha}}_{\psi, a} (y_n -x)\sigma_{y_n}^{\psi} {}^{\psi} \mathcal I_a^{y_{n}} [f](q,y_{n}, \vec{\alpha})   \\
		=  &  \left\{ 
	\begin{array}{ll}  
	\displaystyle \sum_{i=0}^3 f(q_0, \dots, x_i, \dots, q_3) + N[f](q,x, \vec{\alpha}) 
      		, &    x\in 
		J_n ,  \\ 0 , &  x\in \mathbb H\setminus\overline{J_n}.
\end{array} \right. 
\end{align*} 
If $q\in J_n$ doing $x=q$ we obtain 
\begin{align*}  \int_{\partial J_n} \mathfrak K^{\vec{\alpha}}_{\psi, a} (y_n -q)\sigma_{y_n}^{\psi} {}^{\psi} \mathcal I_a^{y_{n}} [f](q,y_{n}, \vec{\alpha})    
		=  4f(q) + N[f](q,q, \vec{\alpha}).
\end{align*} 
\end{enumerate}
\end{corollary}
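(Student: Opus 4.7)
The plan is to regard the three assertions as specialisations of the corollary immediately preceding it (the higher-order fractional Borel-Pompeiu representation of $\sum_{i=0}^3 f(q_0,\dots,x_i,\dots,q_3)+N[f](q,x,\vec{\alpha})$ on $J_n$), and in each case to suppress the terms that vanish under the extra hypotheses. No new integration or analytic estimate is required; the role of this corollary is essentially corollarial bookkeeping.

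For item 1, I would just evaluate the preceding identity at $x=q\in J_n$. Since $q_i$ replaces $x_i$ throughout the tuple $(q_0,\dots,x_i,\dots,q_3)$, one has
\[
\sum_{i=0}^3 f(q_0,\dots,q_i,\dots,q_3)=\sum_{i=0}^3 f(q)=4f(q),
\]
and $N[f](q,x,\vec\alpha)$ becomes $N[f](q,q,\vec\alpha)$. The left-hand side is the same expression with $x$ replaced by $q$, which is exactly what is asserted.

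For item 2, I keep $x$ free but use ${}^{\Psi}\mathfrak D_a^{\vec{\alpha}}[f](q,y_1)=0$ for all $y_1\in J_1$; since $J_1\supset\overline{J_k}$ for every $k$, the integrand in the innermost volume integral (the one containing ${}^{\Psi}\mathfrak D_a^{\vec{\alpha}}[f](q,y_1)$ in the preceding corollary) vanishes identically, and so that term drops out, leaving the displayed formula.

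For item 3, I combine both devices. The hypothesis ${}^{\Psi}\mathfrak D_a^{\vec{\alpha}-k\vec 1}[f](q,x)=0$ on $\partial J_k$, for $k=1,\dots,n-1$, makes each of the intermediate boundary integrands (over $\partial J_{n-1},\partial J_{n-2},\dots,\partial J_1$) vanish, because those integrands are precisely $\sigma_{y_k}^{\psi}\,{}^{\Psi}\mathfrak D_a^{\vec\alpha-(n-k)\vec 1}[f](q,y_k)$ paired against kernels that are integrable on the product domain. Together with the vanishing of the innermost volume integral from item 2, only the outermost surface integral over $\partial J_n$ survives, producing the first formula; setting $x=q$ as in item 1 yields the second. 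The one step that demands care is aligning the shift of index in $\vec\alpha-k\vec 1$ with the correct boundary $\partial J_k$ so that every killed term is indeed killed by the stated hypotheses; this is the only place where a small bookkeeping error could occur, and it is essentially the same verification that appears in the proof of the preceding corollary.
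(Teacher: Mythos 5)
Your proposal is correct and coincides with what the paper intends: the paper offers no separate argument beyond ``may be proved in much the same way as before,'' i.e.\ one specializes the preceding higher-order Borel--Pompeiu corollary by setting $x=q$ (item 1), deleting the volume term killed by ${}^{\Psi}\mathfrak D_a^{\vec{\alpha}}[f](q,\cdot)=0$ on $J_1$ (item 2), and deleting the intermediate boundary terms killed by ${}^{\Psi}\mathfrak D_a^{\vec{\alpha}-k\vec 1}[f](q,\cdot)=0$ on $\partial J_k$ (item 3), exactly as you describe, with your index alignment $k=n-j$ matching the operators ${}^{\Psi}\mathfrak D_a^{\vec{\alpha}-(n-j)\vec 1}$ appearing over $\partial J_{n-j}$.
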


\begin{remark}
Much of the formulas in previous proposition and corollaries can be extended to the right-hand versions of the iterated fractional $\Psi$-Fueter operator.
\end{remark}
\subsection*{Conclusion and Future development}
In this work, we extend basic results from $\psi-$hyperholomorphic function theory and the related operator calculus to the fractional setting. We consider fractional derivative in the sense of Riemann–Liouville to introduce a fractional form of the $\psi-$Fueter operator that depends on an additional vector of complex parameters with fractional real parts. More precisely, we present a fractional version of the classical quaternionic Borel-Pompeiu type formula associated to this fractional $\psi-$Fueter operator.

The results of Subsection \ref{3.2} have been encouraging enough to merit further investigation considering the general case $\Psi:= \{\psi_1 , \dots, \psi_n \}$ for  different structural sets, dealing with a generalized iterated fractional $\Psi$-Fueter operator of order $\vec{\alpha}$ on $AC^n(\overline{J_a^b}, \mathbb H)$ given by
\begin{align*} 
{}^{\Psi, \psi_{n+1}}\mathfrak D_a^{\vec{\alpha}}[f] (q,x)= {{}^{\Psi}\mathcal D_x } \circ {}^{\psi_{n+1}}  \mathcal I_a^x [f] (q,x, n\vec 1-\vec{\alpha}), 
 \end{align*}
for all $q,x\in J_a^b$    and similarly 
\begin{align*} 
{}^{\Psi, \psi_{n+1}}\mathfrak D_{r,a}^{\vec{\alpha}}[f](q,x) = {}^{\Psi}\mathcal D_{r,x}  \circ {}^{\psi_{n+1}}\mathcal I_a^x [f](q, x,n\vec 1-\vec{\alpha}), 
\end{align*}
where $\psi_{n+1}$ is another structural set. Work in this direction is currently under progress, some aspects are still challenging and require further research. 

\section*{Declarations}
\subsection*{Funding} Instituto Polit\'ecnico Nacional (grant number SIP20211188) and CONACYT.}
\subsection*{Conflict of interest} The authors declare that they have no conflict of interest regarding the publication of this paper.
\subsection*{Author contributions} Both authors contributed equally to the manuscript and typed, read, and approved the final form of the manuscript, which is the result of an intensive collaboration.

\end{document}